\documentclass[]{amsart}

\usepackage[english]{babel}
\usepackage[utf8]{inputenc}
\usepackage{paralist}
\usepackage{amsmath, amsfonts, amssymb, amsthm}
\usepackage{color}
\usepackage[normalem]{ulem}
\usepackage{tikz}

\newcommand{\AAA}{\mathbb{A}}
\newcommand{\CC}{\mathbb{C}}

\newcommand{\GG}{\mathbb{G}}
\newcommand{\NN}{\mathbb{N}}

\newcommand{\QQ}{\mathbb{Q}}

\newcommand{\ZZ}{\mathbb{Z}}

\newcommand{\Hc}{\mathcal{H}}

\newcommand{\Oc}{\mathcal{O}}
\newcommand{\Pc}{\mathcal{P}}

\newcommand{\set}[1]{\left\{ #1 \right\}}
\newcommand{\setb}[1]{\left( #1 \right)}
\newcommand{\abs}[1]{\left| #1 \right|}

\newcommand{\m}{\mathrm{m}}
\newcommand{\ibf}{\mathbf{i}}
\newcommand{\ubf}{\mathbf{u}}
\newcommand{\Xbf}{\mathbf{X}}
\newcommand{\xbf}{\mathbf{x}}

\DeclareMathOperator{\Gal}{Gal}

\newtheorem{mymasterthm}{notForUse}
\theoremstyle{definition}

\theoremstyle{plain}
\newtheorem{mylemma}[mymasterthm]{Lemma}
\newtheorem{mythm}[mymasterthm]{Theorem}

\allowdisplaybreaks

\title[Recurrences as Coefficients of Polynomials]{Integral Zeros of a Polynomial with Linear Recurrences as Coefficients}
\subjclass[2010]{11D45, 11D61, 11J87}
\keywords{Diophantine equations, linear recurring sequences, power sums, Subspace theorem}

\author[C. Fuchs]{Clemens Fuchs}
\author[S. Heintze]{Sebastian Heintze}
\thanks{Supported by Austrian Science Fund (FWF): I4406.}

\address{University of Salzburg\newline
	\indent Department of Mathematics\newline
	\indent Hellbrunnerstr. 34 \newline
	\indent A-5020 Salzburg, Austria}
\email{clemens.fuchs@sbg.ac.at, sebastian.heintze@sbg.ac.at}

\begin{document}
	
	\maketitle
	

	\begin{abstract}
		Let $ K $ be a number field, $ S $ a finite set of places of $ K $, and $ \Oc_S $ be the ring of $ S $-integers. Moreover, let
		\begin{equation*}
			G_n^{(0)} Z^d + \cdots + G_n^{(d-1)} Z + G_n^{(d)}
		\end{equation*}
		be a polynomial in $ Z $ having simple linear recurrences of integers evaluated at $ n $ as coefficients. Assuming some technical conditions we give a description of the zeros $ (n,z) \in \NN \times \Oc_S $ of the above polynomial. We also give a result in the spirit of Hilbert irreducibility for such polynomials.
	\end{abstract}
	
	\section{Introduction}
	
	In the present paper we consider a special kind of polynomial-exponential Diophantine equations.
	More specifically we are interested in zeros of a polynomial in one variable where the coefficients of this polynomial are simple linear recurrences taking only integral values evaluated at the same $ n \in \NN $.
	In other words we consider the equation
	\begin{equation}
		\label{p5-eq:introrecurr}
		G_n^{(0)} Z^d + \cdots + G_n^{(d-1)} Z + G_n^{(d)} = 0,
	\end{equation}
	where $ G_n^{(i)} $ are sequences of integers satisfying a simple linear recurrence relation.
	Clearly, this is a family of polynomials parametrized by an exponential variable $ n $; for any $ n \in \NN $ we thus have to deal with a polynomial with integers coefficients.
	The main focus lies on solutions $ (n,z) \in \NN \times \Oc_S $ of \eqref{p5-eq:introrecurr}.
	Here we denote by $\NN$ the set of positive integers and by $ \Oc_S $ the ring of $ S $-integers for a number field $ K $ and a finite set $ S $ of absolute values on $ K $, containing all archimedean ones.
	
	Every simple linear recurrence sequence $ G_n $ can be written in its Binet representation
	\begin{equation*}
		G_n = b_1 \beta_1^n + \cdots + b_t \beta_t^n.
	\end{equation*}
	The $ \beta_i $ are called the (characteristic) roots and the $ b_i $ are called the coefficients of the recurrence $ G_n $. Both, the characteristic roots as well as the coefficients of the recurrence, are elements of a finite extension $ L $ of $ K $. For basic knowledge on recurrences we refer to \cite{schmidt-2003}.
	It follows that we can choose a common numbering $ \beta_1,\ldots,\beta_r $ of all occurring characteristic roots and rewrite \eqref{p5-eq:introrecurr} as
	\begin{equation}
		\label{p5-eq:intropoly}
		a_0(\beta_1^n,\ldots,\beta_r^n) Z^d + \cdots + a_{d-1}(\beta_1^n,\ldots,\beta_r^n) Z + a_d(\beta_1^n,\ldots,\beta_r^n) = 0
	\end{equation}
	with linear polynomials $ a_0(X_1,\ldots,X_r), \ldots, a_d(X_1,\ldots,X_r) $.
	Therefore the problem translates into an equation given by a (rather special lacunary) polynomial for which we seek integral solutions in $ \GG_{\m}^r \times \AAA^1 $.
	Conversely, every hypersurface in $ \GG_{\m}^r \times \AAA^1 $ can be written in the form
	\begin{equation*}
		a_0(X_1,\ldots,X_r) Z^d + \cdots + a_{d-1}(X_1,\ldots,X_r) Z + a_d(X_1,\ldots,X_r) = 0
	\end{equation*}
	for (not necessarily linear) polynomials $ a_j(X_1,\ldots,X_r) $.
	The integral points on such a hypersurface are the elements of $ (\Oc_S^*)^r \times \Oc_S $ which satisfy the given equation.
	If the equation is monic in $ Z $ or the leading coefficient is a constant times a monomial in $ X_1,\ldots,X_r $, then it describes a finite cover $ W \rightarrow \GG_{\m}^r $ given by projection on the first $ r $ components. Here we specialize to a $ 1 $-parameter subgroup of $ \GG_{\m}^r $ for which \eqref{p5-eq:intropoly} is the typical description.
	(Compare e.g. with \cite{fuchs-mantova-zannier-2018} where all regular maps $ \GG_{\m} \rightarrow W $, i.e. function field integral points, of the finite cover $ W \rightarrow \GG_{\m}^r $ are described.)
	
	The first author together with Scremin considered in \cite{fuchs-scremin-2004} integer solutions of \eqref{p5-eq:introrecurr} using the recurrence point of view, i.e. viewing \eqref{p5-eq:introrecurr} as polynomial-exponential Diophantine equation. Their result was heavily based on Corvaja and Zannier's results concerning the equation $f(G_n,Z)=0$ handled in \cite{corvaja-zannier-2002-1}. We also mention \cite{corvaja-zannier-2000} in which similar conditions and results appear. The main and most restrictive technical condition is the existence of ``dominant roots''. Without this condition one can currently expect only weaker results; see e.g. \cite{corvaja-zannier-2002-2} for a result without the dominant root condition and \cite{zannier-2010} for a somewhat weaker, but still useful result.
	Here we will analyze solutions in $ S $-integers and (most of the time) make use of the formulation in the polynomial point of view given by \eqref{p5-eq:intropoly}.
	
	\section{Results}
	
	Within this section we state our results that will be proven in the present paper.
	The first one looks a little bit technical, but it is an essential, intermediate result that will be used to prove the second one.
	We remark that a given set $ S $ of places of $ K $ can be extended such that it satisfies the properties required in the theorems. This only makes the statements stronger.
	
	\begin{mythm}
		\label{p5-thm:mainresult}
		Let $ K $ be a number field and $ g \in K[X_1,\ldots,X_r,Z] $ a polynomial which can be written in the form
		\begin{equation*}
			g(X_1,\ldots,X_r,Z) = a_0(X_1,\ldots,X_r) Z^d + \cdots + a_d(X_1,\ldots,X_r)
		\end{equation*}
		for linear polynomials $ a_0(X_1,\ldots,X_r), \ldots, a_d(X_1,\ldots,X_r) $.
		Furthermore, let $ \widetilde{g} \in K[X_1,\ldots,X_r,\widetilde{Z}] $ be the polynomial given by the equation
		\begin{equation*}
			\widetilde{g}(X_1,\ldots,X_r,a_0(X_1,\ldots,X_r)Z) = a_0(X_1,\ldots,X_r)^{d-1} g(X_1,\ldots,X_r,Z).
		\end{equation*}
		Now assume that either $ a_0(0,\ldots,0) \neq 0 $ and $ g(0,\ldots,0,Z) $ has no multiple zero as a polynomial in $ Z $, or $ a_0(0,\ldots,0) = 0 $ and $ \widetilde{g}(0,\ldots,0,\widetilde{Z}) $ has no multiple zero as a polynomial in $ \widetilde{Z} $.
		Moreover, let $ \gamma_1,\ldots,\gamma_r \in K^* $ such that $ \abs{\gamma_i} < 1 $ for all $ 1 \leq i \leq r $ and such that no ratio $ \gamma_i / \gamma_j $ for $ i \neq j $ is a root of unity.
		Assume that $ S $ is a finite set of places of $ K $, containing all archimedean ones, and such that $ \gamma_1,\ldots,\gamma_r $ and all non-zero coefficients of $ a_i(X_1,\ldots,X_r) $ for $ i=0,\ldots,d $ are $ S $-units.
		
		Then there are finitely many cosets $ u_1H_1, \ldots, u_tH_t \subseteq \GG_{\m}^{r} $ and for each coset $ u_iH_i $ a polynomial $ P_i $ in $ r $ unknowns such that the following holds:
		For each solution $ (n,z) \in \NN \times \Oc_S $ of $ g(\gamma_1^n,\ldots,\gamma_r^n,z) = 0 $ with $ z \neq 0 $ and $ n $ large enough, there exists an index $ i $ such that $ (\gamma_1^n,\ldots,\gamma_r^n) \in u_iH_i $ and $ z' = P_i(\gamma_1^n,\ldots,\gamma_r^n) $, where $ z'=z $ in the case $ a_0(0,\ldots,0) \neq 0 $ and $ z' = a_0(\gamma_1^n,\ldots,\gamma_r^n) z $ if $ a_0(0,\ldots,0) = 0 $, respectively.
	\end{mythm}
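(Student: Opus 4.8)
The plan is to factor $g$ into analytic branches over the point $(0,\dots,0)$ and to play their convergence off against the Subspace theorem. First I would reduce to a monic separable situation by setting $(W,w):=(Z,z)$ if $a_0(0,\dots,0)\neq 0$ and $(W,w):=(\widetilde Z,\,a_0(\gamma_1^n,\dots,\gamma_r^n)z)$ if $a_0(0,\dots,0)=0$; in the latter case $w\in\Oc_S$ because $\gamma_1,\dots,\gamma_r$ and the non-zero coefficients of $a_0$ are $S$-units. In either case one gets a polynomial $h\in K[[X_1,\dots,X_r]][W]$, monic in $W$ (in the first case $h=g/a_0$, using that $a_0$ is a unit in $K[[X_1,\dots,X_r]]$; in the second case $h=\widetilde g$), with $h(0,\dots,0,W)$ separable by the hypothesis on $g$ resp. $\widetilde g$, and such that $w$ is a root of $h(\gamma_1^n,\dots,\gamma_r^n,W)$ whenever $z$ is a root of $g(\gamma_1^n,\dots,\gamma_r^n,Z)$ and $n$ is large. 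Enlarging $K$ to a number field $L$ containing the roots $\rho_1,\dots,\rho_d$ of $h(0,\dots,0,W)$ and enlarging $S$ accordingly -- which, as the text remarks, only strengthens the statement -- I may assume $\rho_1,\dots,\rho_d\in K$ are pairwise distinct. Hensel's lemma in the complete local ring $K[[X_1,\dots,X_r]]$ then yields $h=\prod_{k=1}^d(W-\phi_k)$ with $\phi_k=\sum_{\mathbf m\ge\mathbf 0}c_{k,\mathbf m}X^{\mathbf m}\in K[[X_1,\dots,X_r]]$, $\phi_k(0,\dots,0)=\rho_k$; being an algebraic power series, $\phi_k$ converges on a polydisc of positive radius at a fixed archimedean place $v_0$ at which $\rho:=\max_i\abs{\gamma_i}_{v_0}<1$. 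Hence, for $n$ large, the $d$ roots in $W$ of $h(\gamma_1^n,\dots,\gamma_r^n,W)$ are exactly $\phi_1(\gamma_1^n,\dots,\gamma_r^n),\dots,\phi_d(\gamma_1^n,\dots,\gamma_r^n)$, so every solution gives $w=\phi_k(\gamma_1^n,\dots,\gamma_r^n)$ for some $k$; it then suffices to fix $k$ and treat the set $\Sc_k$ of those $n$.

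The heart of the argument is an application of the Subspace theorem. For a parameter $N$ and for $n\in\Sc_k$ I would split off the truncation $w=P_{k,N}(\gamma_1^n,\dots,\gamma_r^n)+E_N(n)$, where $P_{k,N}=\sum_{\abs{\mathbf m}\le N}c_{k,\mathbf m}X^{\mathbf m}$ and, by convergence, $\abs{E_N(n)}_{v_0}\ll\rho^{(N+1)n}$. To the point $\mathbf x_n$ with coordinates $w$ and the monomials $\gamma_1^{nm_1}\cdots\gamma_r^{nm_r}$ for $\abs{\mathbf m}\le N$, I apply the Subspace theorem, using at $v_0$ the linear form $X_0-\sum_{\abs{\mathbf m}\le N}c_{k,\mathbf m}X_{\mathbf m}$ (whose value at $\mathbf x_n$ is the very small quantity $E_N(n)$) together with the coordinate forms, and at every other place of $S$ the full set of coordinate forms. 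Since each $\gamma_1^{nm_1}\cdots\gamma_r^{nm_r}$ is an $S$-unit, the product formula gives $\prod_{v\in S}\abs{\gamma_1^{nm_1}\cdots\gamma_r^{nm_r}}_v=1$; and since $w$ is a root of a polynomial whose coefficients are fixed polynomials evaluated at the $\gamma_i^n$, its height, the quantity $\prod_{v\in S}\abs w_v$, and the height $\mathrm H(\mathbf x_n)$ are all bounded by $C^n$ for a fixed $C>1$. Therefore the product over $S$ of the chosen linear forms at $\mathbf x_n$ is $\ll\abs{E_N(n)}_{v_0}\,C^n\ll(\rho^{N+1}C)^n$, and choosing $N$ so large that $\rho^{N+1}C<1$ makes this eventually smaller than $\mathrm H(\mathbf x_n)^{-\varepsilon}$ for a suitable fixed $\varepsilon>0$. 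The Subspace theorem then forces all but finitely many of the $\mathbf x_n$, $n\in\Sc_k$, into a finite union of hyperplanes.

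It remains to convert each such hyperplane into a coset together with its polynomial. A hyperplane yields a non-trivial relation $\lambda_0 w+\sum_{\abs{\mathbf m}\le N}\lambda_{\mathbf m}\gamma_1^{nm_1}\cdots\gamma_r^{nm_r}=0$ holding for infinitely many $n\in\Sc_k$. If $\lambda_0\neq 0$, this already exhibits $w$ as $Q(\gamma_1^n,\dots,\gamma_r^n)$ for the explicit polynomial $Q=-\lambda_0^{-1}\sum\lambda_{\mathbf m}X^{\mathbf m}$; substituting back into $g(\gamma_1^n,\dots,\gamma_r^n,z)=0$ places $(\gamma_1^n,\dots,\gamma_r^n)$ in the zero set of the linear recurrence $n\mapsto h(\gamma_1^n,\dots,\gamma_r^n,Q(\gamma_1^n,\dots,\gamma_r^n))$, which by the Skolem--Mahler--Lech theorem is a finite union of arithmetic progressions together with a finite set; along a progression $n\equiv a\pmod m$ the tuple $(\gamma_1^n,\dots,\gamma_r^n)$ runs through a Zariski-dense subset of a coset $uH$, with $u=(\gamma_1^a,\dots,\gamma_r^a)$ and $H$ the connected Zariski closure of the group generated by $(\gamma_1^m,\dots,\gamma_r^m)$, and the restriction of $Q$ to $uH$ is the required $P_i$. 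If $\lambda_0=0$, one obtains instead a non-trivial multiplicative relation among the $\gamma_i^n$; again Skolem--Mahler--Lech confines $n$ to finitely many progressions, and on each of them a multiplicative relation among $\gamma_1^m,\dots,\gamma_r^m$ lets one re-parametrise the relevant coset of a proper subtorus by fewer than $r$ new parameters, still $S$-units of absolute value $<1$ (here the hypothesis that no $\gamma_i/\gamma_j$ is a root of unity is used to keep the reduced data non-degenerate), so that the whole argument can be repeated there with strictly smaller $r$; this descent terminates. Assembling the finitely many cosets and polynomials thus produced, and discarding the finitely many small $n$, gives $u_1H_1,\dots,u_tH_t$ and $P_1,\dots,P_t$.

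The step I expect to be the main obstacle is the Subspace-theorem estimate of the middle paragraph: one must organise the product-formula bookkeeping so that the exponentially small gain $\rho^{(N+1)n}$ at $v_0$ dominates simultaneously the (at most exponential) growth of $\mathrm H(\mathbf x_n)$ and of $\prod_{v\in S}\abs w_v$, and one must verify that the linear forms chosen at each place of $S$ are linearly independent. A secondary, more book-keeping-heavy difficulty is the descent in the case $\lambda_0=0$, where one has to check that all hypotheses of the theorem survive the passage to a coset of a proper subtorus.
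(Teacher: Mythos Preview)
Your proposal is essentially correct, and its first paragraph matches the paper closely: the reduction to a monic equation via $\widetilde g$, the identification of the roots as values of convergent algebraic power series in $\gamma_1^n,\ldots,\gamma_r^n$ (you use Hensel's lemma, the paper uses the Implicit Function Theorem, which amounts to the same thing here), and the observation that $w\in\Oc_S$ in both cases are exactly the preparatory steps the paper carries out.

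Where you diverge is in the engine. You apply the Subspace theorem directly to the vector of $w$ together with the monomials $\gamma^{n\mathbf m}$ for $|\mathbf m|\le N$, using the truncation error as the small linear form, and then sort out the resulting hyperplanes by hand: the $\lambda_0\neq 0$ case gives the polynomial immediately, while the $\lambda_0=0$ case forces a descent on $r$ along proper subtori. The paper instead invokes Corvaja--Zannier's theorem on $S$-unit points on analytic hypersurfaces (their Theorem~\ref{p5-thm:infinmanycosets}) as a black box: once one checks $h_S(x_{ni})=0$, $\widehat h(\xbf_n)=O(n)$, $h_S(w)=0$, and $h(w)=O(h(\xbf_n))$ (the last via Lemma~\ref{p5-lemma:heightofzero}), that theorem outputs the cosets $u_iH_i$ and the polynomials $P_i$ in one stroke. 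The authors explicitly remark that this is the point of the paper compared to Fuchs--Scremin~2004, where the Subspace theorem was applied directly as you do; your route is precisely that earlier, more laborious one, and the descent you flag as ``book-keeping-heavy'' is exactly what the Corvaja--Zannier result absorbs. Both approaches are valid; the paper's is considerably shorter, while yours is more self-contained but requires the iterated hyperplane analysis to be carried out carefully (in particular, in the $\lambda_0=0$ case you should be aware that the relation $\sum\lambda_{\mathbf m}\gamma^{n\mathbf m}=0$ is additive, not multiplicative, and the passage to a genuine multiplicative dependence among the $\gamma_i$---needed for the reduction of $r$---goes through a further Skolem--Mahler--Lech/vanishing-subsum step that you have only alluded to).
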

	
	It is clear that the same result holds true if we assume that $ a_0(0,\ldots,0) \neq 0 $ and $ \widetilde{g}(0,\ldots,0,\widetilde{Z}) $ has no multiple zero as a polynomial in $ \widetilde{Z} $, as a third alternative for the two options given in the theorem. The proof is completely the same.

	Let us emphasize that this result, as the result in \cite{fuchs-scremin-2004}, goes in the same direction as Corvaja and Zannier's Theorem 2 of \cite{corvaja-zannier-2002-1} and uses similar assumptions, though the results are not quite equal (in the sense that our result does not directly follow from theirs and vice-versa). Moreover, we completely build on the methods developed by them (see Theorem \ref{p5-thm:infinmanycosets} below, which is our main tool, or, e.g., their book \cite{corvaja-zannier-2018} summarizing the developments and results).
	
	Our second theorem considers now polynomial equations with simple linear recurrences as coefficients. The first author and Scremin considered in \cite{fuchs-scremin-2004} a similar situation. The difference is that they were only looking for solutions in $ \NN \times \ZZ $ and that they imposed other (stronger) restrictions on the characteristic roots.
	
	\begin{mythm}
		\label{p5-thm:solofpolyexp}
		Let $ K, g, \widetilde{g}, \gamma_1, \ldots, \gamma_r $ and $ S $ be as in Theorem \ref{p5-thm:mainresult}.
		Then there are finitely many linear recurrences $ R_1(n), \ldots, R_s(n) $ with algebraic roots and algebraic coefficients, arithmetic progressions $ \Pc_1, \ldots, \Pc_s $, as well as finite sets $ M $ and $ N $ such that the set $ L $ of solutions $ (n,z) \in \NN \times \Oc_S $ of the equation $ g(\gamma_1^n,\ldots,\gamma_r^n,z) = 0 $ can be described by
		\begin{equation*}
			L = \bigcup_{j=1}^{s} \set{(n,R_j(n)) : n \in \Pc_j, R_j(n) \in \Oc_S} \cup \set{(n,z) : n \in N, z \in \Oc_S} \cup M.
		\end{equation*}
	\end{mythm}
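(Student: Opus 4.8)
The plan is to feed the information provided by Theorem~\ref{p5-thm:mainresult} into elementary bookkeeping with arithmetic progressions, using the Skolem--Mahler--Lech theorem to control the exponents that can occur and known results on integral values of ratios of linear recurrences to deal with the case $a_0(0,\ldots,0)=0$. We may assume $g\not\equiv 0$, since otherwise the hypothesis on multiple zeros is void. Fix a solution $(n,z)$. First I would dispose of three degenerate situations. If $g(\gamma_1^n,\ldots,\gamma_r^n,Z)\equiv 0$ as a polynomial in $Z$, then all $a_i(\gamma_1^n,\ldots,\gamma_r^n)$ vanish; choosing some $a_{i_0}\not\equiv 0$, the sequence $n\mapsto a_{i_0}(\gamma_1^n,\ldots,\gamma_r^n)$ is a linear recurrence whose characteristic roots (the $\gamma_j$ with non-zero coefficient, together with $1$ if the constant term is non-zero) have only non-roots-of-unity as mutual ratios---this uses $\abs{\gamma_i}<1$ and the hypothesis on $\gamma_i/\gamma_j$---so it is non-degenerate and vanishes for only finitely many $n$; these $n$ go into $N$. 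If $z=0$, then $a_d(\gamma_1^n,\ldots,\gamma_r^n)=0$; the same argument shows that either $a_d\not\equiv 0$ and there are finitely many such $n$ (the pairs $(n,0)$ go into $M$), or $a_d\equiv 0$, in which case the pairs $(n,0)$ are accounted for by the zero recurrence on $\Pc=\NN$. Finally the finitely many $n$ that are not ``large enough'' for Theorem~\ref{p5-thm:mainresult} contribute only finitely many solutions, which we put into $M$. From now on $z\neq 0$, $n$ is large, and $g(\gamma_1^n,\ldots,\gamma_r^n,Z)\not\equiv 0$.

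By Theorem~\ref{p5-thm:mainresult} there is an index $i$ with $(\gamma_1^n,\ldots,\gamma_r^n)\in u_iH_i$ and $z'=P_i(\gamma_1^n,\ldots,\gamma_r^n)$, where $z'=z$ or $z'=a_0(\gamma_1^n,\ldots,\gamma_r^n)z$. The next step is to understand the exponents $n$ with $(\gamma_1^n,\ldots,\gamma_r^n)\in u_iH_i$. Writing $H_i=\set{x\in\GG_{\m}^r : x^{\mathbf{m}}=1 \text{ for } \mathbf{m}\in\Lambda_i}$ for a subgroup $\Lambda_i\subseteq\ZZ^r$ and picking generators $\mathbf{m}_1,\ldots,\mathbf{m}_k$ of $\Lambda_i$, this membership is equivalent to the finite system of equations $\delta_\ell^{\,n}=c_\ell$ ($\ell=1,\ldots,k$) with $\delta_\ell$ the corresponding monomial in the $\gamma_j$ and $c_\ell$ the same monomial in the coordinates of $u_i$. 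The solution set of a single such equation is empty, a single $n$, or---when $\delta_\ell$ is a root of unity---one residue class modulo $\ord(\delta_\ell)$; intersecting over $\ell$ leaves either a finite set or a single arithmetic progression. In the finite case the finitely many exponents (each carrying at most $d$ values of $z$) go into $M$, so we may assume we are on an infinite arithmetic progression $Q_i$.

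On $Q_i$ we have $z'=P_i(\gamma_1^n,\ldots,\gamma_r^n)$, and $n\mapsto P_i(\gamma_1^n,\ldots,\gamma_r^n)=:\rho_i(n)$ is itself a linear recurrence with algebraic roots and coefficients, being a polynomial expression in the $\gamma_j^n$. Consider the linear recurrence $\tau_i(n):=\widetilde{g}(\gamma_1^n,\ldots,\gamma_r^n,\rho_i(n))$; since $\widetilde{g}(X,a_0(X)Z)=a_0(X)^{d-1}g(X,Z)$ we have $\tau_i(n)=a_0(\gamma_1^n,\ldots,\gamma_r^n)^{d-1}g(\gamma_1^n,\ldots,\gamma_r^n,z)$, so the vanishing of $\tau_i(n)$ is equivalent to $g(\gamma_1^n,\ldots,\gamma_r^n,z)=0$ whenever $a_0(\gamma_1^n,\ldots,\gamma_r^n)\neq 0$ (in the case $a_0(0,\ldots,0)\neq 0$ one may work with $\tau_i(n):=g(\gamma_1^n,\ldots,\gamma_r^n,\rho_i(n))$ directly). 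By the Skolem--Mahler--Lech theorem the zero set of $\tau_i$ is a finite union of arithmetic progressions together with a finite set; intersecting with $Q_i$, discarding the finitely many exceptional exponents into $M$ (and likewise the finitely many $n$ on $Q_i$ with $a_0(\gamma_1^n,\ldots,\gamma_r^n)=0$, unless $a_0$ vanishes identically on the coset, in which case the equation reduces to one of smaller degree in $Z$ and one recurses), we are left with finitely many arithmetic progressions $\Pc$ on each of which the equation holds identically. If $a_0(0,\ldots,0)\neq 0$, then $z=z'=\rho_i(n)$ on $\Pc$, and $R:=\rho_i$ restricted to $\Pc$ is the desired recurrence: by construction the pairs $(n,\rho_i(n))$ with $n\in\Pc$ and $\rho_i(n)\in\Oc_S$ are precisely the solutions picked up by this progression.

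It remains to treat $a_0(0,\ldots,0)=0$. Here, since $\widetilde{g}(0,\ldots,0,\widetilde{Z})=\widetilde{Z}^{d-1}(\widetilde{Z}+a_1(0,\ldots,0))$, the hypothesis that this has no multiple zero forces $d\leq 2$ (and $a_1(0,\ldots,0)\neq 0$ when $d=2$). On a progression $\Pc$ as above we now have $a_0(\gamma_1^n,\ldots,\gamma_r^n)z=\rho_i(n)$, hence $z=\rho_i(n)/a_0(\gamma_1^n,\ldots,\gamma_r^n)$ is a quotient of two linear recurrences, and this is the point where I expect the main work to lie. Here one invokes the theory of integral values of ratios of linear recurrences (see \cite{corvaja-zannier-2018}, and the Hadamard Quotient Theorem of Pourchet and van der Poorten): either $z\in\Oc_S$ only for finitely many $n\in\Pc$, all of which are absorbed into $M$, or $z$ agrees, on a finite union of sub-progressions of $\Pc$, with a genuine linear recurrence $R$ with algebraic roots and coefficients, which is then the recurrence attached to those sub-progressions; the divisibility $a_0(\gamma_1^n,\ldots,\gamma_r^n)\mid\rho_i(n)$ underlying this can alternatively be read off from the factorization of the monic polynomial $\widetilde{g}$ over the coset into polynomial branches, one of which is divisible by $a_0$ because $a_1$ does not vanish on $\set{a_0=0}$ while the product of the branches equals $\pm a_d\,a_0^{\,d-1}$. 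Collecting the finitely many recurrences $R_1,\ldots,R_s$ and progressions $\Pc_1,\ldots,\Pc_s$ produced above, together with $N$ and the finite set $M$ of all exceptional solutions accumulated along the way, yields the claimed description: the inclusion ``$\subseteq$'' is exactly the outcome of the case analysis driven by Theorem~\ref{p5-thm:mainresult}, the reduction of coset membership to progressions, and Skolem--Mahler--Lech, while ``$\supseteq$'' holds because on each $\Pc_j$ we arranged $g(\gamma_1^n,\ldots,\gamma_r^n,R_j(n))=0$ to hold identically---so $(n,R_j(n))$ with $n\in\Pc_j$ and $R_j(n)\in\Oc_S$ is always a solution---and because the pairs attached to $N$ and $M$ are solutions by construction.
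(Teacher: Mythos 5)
Your proposal is correct and follows essentially the same route as the paper: dispose of the finitely many degenerate $n$ and the case $z=0$, apply Theorem \ref{p5-thm:mainresult}, then use Skolem--Mahler--Lech when $z=P_i(\gamma_1^n,\ldots,\gamma_r^n)$ and the Corvaja--Zannier variant of the Hadamard Quotient Theorem when $z=P_i(\gamma_1^n,\ldots,\gamma_r^n)/a_0(\gamma_1^n,\ldots,\gamma_r^n)$. The extra material (the reduction of coset membership to arithmetic progressions, and the observation that $a_0(0,\ldots,0)=0$ forces $d\leq 2$) is correct but not needed for the argument.
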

	
    We also exploit the reducibility of such a polynomial. This follows similar to Theorem \ref{p5-thm:mainresult} and by standard arguments in the theory of Hilbert irreducibility (see e.g. \cite{lang-1983}); compare also with the application of such results on Diophantine equations with power sums to universal Hilbert sets in \cite{corvaja-zannier-1998} and \cite{corvaja-zannier-2002-1} and with \cite{zannier-2010}.
	
    \begin{mythm}
	    \label{p5-thm:red}
    	Let $ K, g, \gamma_1, \ldots, \gamma_r $ and $ S $ be as in Theorem \ref{p5-thm:mainresult}. Moreover, assume that $ g $ is monic as a polynomial in $ Z $, i.e. $ a_0(X_1,\ldots,X_r) = 1 $. Then $ g(\gamma_1^n,\ldots,\gamma_r^n,Z) $ is reducible in $ K[Z] $ for infinitely many $ n \in \NN $ if and only if there exist monic polynomials $ h_1(n,Z), h_2(n,Z) $, whose coefficients are linear recurrences with algebraic characteristic roots and algebraic coefficients, and an arithmetic progression $ \Pc $ such that $ g(\gamma_1^n,\ldots,\gamma_r^n,Z) = h_1(n,Z) h_2(n,Z) $ is a factorization in $ K[Z] $ for all $ n \in \Pc $.
    \end{mythm}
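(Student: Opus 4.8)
The plan is to derive Theorem \ref{p5-thm:red} from Theorem \ref{p5-thm:mainresult} (or rather from its proof/method, together with Theorem \ref{p5-thm:solofpolyexp}) by the standard Hilbert-irreducibility trick of passing to roots and grouping them into Galois-stable clusters. The ``if'' direction is trivial: if $g(\gamma_1^n,\ldots,\gamma_r^n,Z)=h_1(n,Z)h_2(n,Z)$ holds for all $n$ in an arithmetic progression $\Pc$ with $h_1,h_2$ monic of positive degree and recurrence coefficients, then $g(\gamma_1^n,\ldots,\gamma_r^n,Z)$ is reducible in $K[Z]$ for every $n\in\Pc$ — infinitely many $n$ — provided $\Pc$ is infinite (one may discard the finitely many $n$ for which a coefficient recurrence vanishes identically into lower degree, or absorb them). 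So the content is the ``only if'' direction.

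**For the hard direction**, suppose $g(\gamma_1^n,\ldots,\gamma_r^n,Z)$ is reducible in $K[Z]$ for infinitely many $n$. Since $g$ is monic in $Z$ of degree $d$, a factorization corresponds to a proper nonempty subset of the $d$ roots (with multiplicity) whose elementary symmetric functions lie in $K$. The first step is to control the roots: over a suitable splitting situation one writes, for large $n$, the $d$ roots $z_1(n),\ldots,z_d(n)$ of $g(\gamma_1^n,\ldots,\gamma_r^n,Z)=0$. By the hypothesis that $g(0,\ldots,0,Z)$ has no multiple zero (the case $a_0(0,\ldots,0)=1\ne 0$ of Theorem \ref{p5-thm:mainresult}), for $n$ large the specialized polynomial is close to $g(0,\ldots,0,Z)$ and hence also has $d$ distinct roots, each analytic in the small quantities $\gamma_i^n$; more precisely each root is given by a convergent Puiseux/power series in $\gamma_1^n,\ldots,\gamma_r^n$, and by Theorem \ref{p5-thm:solofpolyexp} applied to the (at most) finitely many relevant auxiliary equations — or directly by the coset/polynomial description of Theorem \ref{p5-thm:mainresult} extended to the integral closure — one gets that on each of finitely many arithmetic progressions the roots are themselves values $z_i(n)=P_i(\gamma_1^n,\ldots,\gamma_r^n)$ of fixed polynomials (equivalently, linear recurrences in $n$ with algebraic data). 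The key point to extract is that the \emph{set} of roots $\{z_1(n),\ldots,z_d(n)\}$, as an unordered $d$-tuple, depends on $n$ in a ``recurrence-like'' way that is eventually periodic in how the Galois group $\Gal(\overline{K}/K)$ permutes it.

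**Next**, restrict to one arithmetic progression $\Pc$ (infinite, chosen among the finitely many so that infinitely many reducible $n$ lie in it) on which all roots are simultaneously given by fixed polynomials $P_1,\ldots,P_d$ in $(\gamma_1^n,\ldots,\gamma_r^n)$, and on which the Galois action is uniform: for each $\sigma\in\Gal(\overline{K}(\gamma_i)/\text{base})$ the induced permutation of $\{P_1,\ldots,P_d\}$ is the same for all $n\in\Pc$ (this uses that the conjugates of $\gamma_i^n$ and the vanishing pattern of the $P_i-P_j$ stabilize along a subprogression, by a pigeonhole on finitely many possibilities combined with the no-multiple-zero hypothesis forcing $P_i\ne P_j$ as functions for distinct roots). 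A factorization of $g(\gamma_1^n,\ldots,\gamma_r^n,Z)$ over $K$ for $n\in\Pc$ then corresponds to a $\Gal$-stable subset $T\subseteq\{1,\ldots,d\}$, and since there are only finitely many subsets, one fixed such $T$ (nonempty, proper) works for infinitely many $n\in\Pc$; passing to a further subprogression $\Pc'$ it works for all $n\in\Pc'$. Set $h_1(n,Z)=\prod_{i\in T}(Z-P_i(\gamma_1^n,\ldots,\gamma_r^n))$ and $h_2(n,Z)=\prod_{i\notin T}(Z-P_i(\gamma_1^n,\ldots,\gamma_r^n))$; these are monic, their coefficients are (by the fundamental theorem on symmetric functions applied to the $\Gal$-orbit sums) polynomials in $\gamma_1^n,\ldots,\gamma_r^n$ with algebraic coefficients, hence linear recurrences in $n$ with algebraic characteristic roots and coefficients, and $g(\gamma_1^n,\ldots,\gamma_r^n,Z)=h_1 h_2$ identically on $\Pc'$. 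Finally observe $h_1,h_2\in K[Z]$ for $n\in\Pc'$ precisely because $T$ is $\Gal$-stable, which is what we arranged.

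**The main obstacle** is the uniformization step: making precise that the roots of the specialized polynomials, as $n$ ranges over an arithmetic progression, can be written by \emph{finitely many} fixed polynomial expressions in $\gamma_1^n,\ldots,\gamma_r^n$ with the Galois action on them being constant along the progression. This is exactly where Theorem \ref{p5-thm:mainresult} (the coset-and-polynomial description of integral zeros, applied not over $\Oc_S$ but over the $S$-integers of a fixed finite extension containing all $\gamma_i$ and adjusting $S$ accordingly) does the work, together with the fact that the number of cosets, the number of polynomials $P_i$, the degrees of the $\gamma_i$, and the number of subsets $T$ are all finite, so an iterated pigeonhole over arithmetic progressions lands us on a single progression with everything constant. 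One must also handle bookkeeping: discarding the finitely many $n$ in the exceptional sets $N$ and $M$ of Theorem \ref{p5-thm:solofpolyexp}, and noting that because $g$ is monic the leading coefficient issue of Theorem \ref{p5-thm:mainresult} does not arise ($a_0\equiv 1$), so $z'=z$ throughout and no denominators are introduced.
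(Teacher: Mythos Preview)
Your overall architecture---write the roots as convergent power series in $\gamma_1^n,\ldots,\gamma_r^n$ via the Implicit Function Theorem, pigeonhole on the finitely many ways to split them into two factors, and then pass to an arithmetic progression---matches the paper. The gap is in the step where you claim that the individual roots $z_i(n)$ can be turned into polynomial expressions $P_i(\gamma_1^n,\ldots,\gamma_r^n)$ by invoking Theorem~\ref{p5-thm:mainresult} (or Theorem~\ref{p5-thm:solofpolyexp}) ``over the $S$-integers of a fixed finite extension''. Those theorems, and the underlying Theorem~\ref{p5-thm:infinmanycosets} of Corvaja--Zannier, require the values $f(\xbf_n)$ to lie in a \emph{fixed} number field $K$. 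The roots $z_i(n)$ are algebraic of degree $\le d$ over $K$, but the extension $K(z_i(n))$ genuinely varies with $n$; there is no single finite extension containing them all, so you cannot enlarge $K$ once and for all and then apply Theorem~\ref{p5-thm:mainresult} to the roots. (The $\gamma_i$ are already in $K$, so adjoining them changes nothing.)

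The paper repairs exactly this point by reversing the order of your last two steps. It first pigeonholes to obtain, on an infinite set $W_3$, a fixed grouping $g(\gamma_1^n,\ldots,\gamma_r^n,Z)=h_1(n,Z)\,h_2(n,Z)$ where the coefficients of $h_1,h_2$ are elementary symmetric functions of some of the $f_i(\gamma_1^n,\ldots,\gamma_r^n)$, hence again convergent power series with algebraic coefficients. Because this is by hypothesis a factorization in $K[Z]$, those coefficient values already lie in $K$---no Galois bookkeeping needed. One then applies Theorem~\ref{p5-thm:infinmanycosets} directly to these $K$-valued power series. Condition~3) (the $S$-height condition) is verified by a Gauss-norm argument: since $g(\gamma_1^n,\ldots,\gamma_r^n,Z)$ is monic with $\Oc_S$-coefficients and $h_1,h_2$ are monic, $\lvert h_1\rvert_\nu\lvert h_2\rvert_\nu=1$ with each factor $\ge 1$ for $\nu\notin S$, forcing all coefficients of $h_1,h_2$ into $\Oc_S$. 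This yields the polynomial form of the coefficients, and a final application of Skolem--Mahler--Lech to the identity $g-h_1h_2=0$ (coefficientwise, each a linear recurrence with infinitely many zeros) produces the arithmetic progression. Your proposal would go through if you moved the ``power series $\to$ polynomial'' step from the roots to the $K$-rational coefficients of $h_1,h_2$.
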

	
	In the case that the polynomial $ g $ is not monic in $ Z $, one can use the transformation to $ \widetilde{g} $ written down in Theorem \ref{p5-thm:mainresult}. Then $ \widetilde{g} $ is monic in $ \widetilde{Z} $ and Theorem \ref{p5-thm:red} can be applied to it.
	Going back to $ g $ then yields the result that $ g(\gamma_1^n,\ldots,\gamma_r^n,Z) $ is reducible in $ K[Z] $ for infinitely many $ n \in \NN $ if and only if there exist polynomials $ h_1(n,Z), h_2(n,Z) $, whose coefficients are linear recurrences with algebraic characteristic roots and algebraic coefficients, and an arithmetic progression $ \Pc $ such that $ a_0(\gamma_1^n,\ldots,\gamma_r^n)^{d-1} g(\gamma_1^n,\ldots,\gamma_r^n,Z) = h_1(n,Z) h_2(n,Z) $ is a factorization in $ K[Z] $ for all $ n \in \Pc $.
	
	We remark that generic decompositions, as they occur in the statement of Theorem \ref{p5-thm:red}, can be computed by Lemma 2.1 in \cite{corvaja-zannier-2002-2}.
	
    It follows, under the conditions we work in, that if $ g(\gamma_1^n,\ldots,\gamma_r^n,Z) $ is irreducible as a polynomial in $ Z $ over the ring of $ K $-valued power sums (or, more general, the Hadamard ring of linear recurrences in $ K $), then it cannot be reducible in $ K[Z] $ for infinitely many $ n \in \NN $. As usual one may deduce that all decompositions can be described in ``finite terms'' coming from finitely many generic decompositions of $ g(\gamma_1^n,\ldots,\gamma_r^n,Z) $ over the ring whose coefficients are linear recurrences in $ K $ with finitely many exceptions.
	
	In contrast to \cite{fuchs-scremin-2004} we have now a much more powerful tool in our hands; instead of applying the Subspace theorem directly (as done in \cite{fuchs-scremin-2004}), we can apply Theorem \ref{p5-thm:infinmanycosets} below (that in turn follows from the Subspace theorem). This leads to a much quicker proof. A few words on the strategy of proof can be found at the beginning of section \ref{p5-sec:proofsec}.
	
	\section{Preliminaries}
	
	In our proofs we will need some auxiliary results which are listed below. The first one is a result of Schmidt about the zero multiplicity of linear recurrence sequences and can be found in \cite{schmidt-1999}:
	\begin{mythm}
		\label{p5-thm:zerobound}
		Suppose that $ (G_n)_{n \in \ZZ} $ is a non-degenerate linear recurrence sequence of complex numbers, whose characteristic polynomial has $ k $ distinct roots of multiplicity $ \leq a $. Then the number of solutions $ n \in \ZZ $ of the equation
		\begin{equation*}
			G_n = 0
		\end{equation*}
		can be bounded above by
		\begin{equation*}
			c(k,a) = e^{(7k^a)^{8k^a}}.
		\end{equation*}
	\end{mythm}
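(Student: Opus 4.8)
\emph{Sketch of proof.} The plan is to reproduce Schmidt's argument, whose engine is the quantitative Subspace theorem combined with an induction on the number of characteristic roots; non-degeneracy is exactly the hypothesis under which the Skolem--Mahler--Lech theorem already forces the zero set to be finite, and the point is to make this finiteness uniform and explicit. First I would pass from the given complex recurrence to one over $ \overline{\QQ} $. Write $ G_n = \sum_{i=1}^{k} p_i(n)\alpha_i^n $ in Binet form with the $ \alpha_i $ distinct and nonzero and $ \deg p_i \le a-1 $, let $ R \subset \CC $ be the finitely generated ring generated by the $ \alpha_i^{\pm 1} $ and the coefficients of the $ p_i $, and choose a ring homomorphism $ \varphi \colon R \to \overline{\QQ} $ that annihilates none of the finitely many nonzero elements $ \alpha_i $, $ \alpha_i - \alpha_j $ and coefficients of the $ p_i $, and that introduces no new multiplicative relations among the images of the $ \alpha_i $ (such $ \varphi $ exists by standard specialization over a finitely generated domain). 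Then $ \varphi $ sends $ (G_n) $ to a non-degenerate recurrence over a number field $ K $ with the same invariants $ k $ and $ a $ and with at least as many zeros, so it suffices to bound the algebraic case.

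Next I would set up the fixed-form version of the Subspace theorem. Relabel so that $ p_k \not\equiv 0 $; for the at most $ a-1 $ values of $ n $ with $ p_k(n)=0 $ count trivially, and for the remaining $ n $ divide $ G_n = 0 $ by $ \alpha_k^n $ to get $ \sum_{i=1}^{k-1} p_i(n)\beta_i^n + p_k(n) = 0 $ with $ \beta_i := \alpha_i/\alpha_k $. Enlarge $ S $ so that all $ \beta_i $ and all coefficients of the $ p_i $ are $ S $-units, and introduce the point $ \ubf_n \in \PP^{ka-1}(K) $ whose coordinates are $ n^j \beta_i^n $ (for $ 1 \le i \le k-1 $, $ 0 \le j \le a-1 $) and $ n^j $ (for $ 0 \le j \le a-1 $); the equation becomes $ L(\ubf_n) = 0 $ for the \emph{fixed} linear form $ L $ built from the coefficients of the $ p_i $. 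A routine estimate shows $ H(\ubf_n) $ grows geometrically in $ n $, while the relevant double product of $ L $ and the coordinate forms at $ \ubf_n $ over $ S $ is bounded by $ c \cdot H(\ubf_n)^{-\delta} $ for explicit $ c,\delta > 0 $ (the non-archimedean factors are controlled by $ S $-unitness, the archimedean ones by $ \abs{\beta_i} < 1 $ and trivial bounds, and the factors $ n^j $ cost only a subexponential loss). The quantitative Subspace theorem then yields: every $ \ubf_n $ with $ H(\ubf_n) $ above an explicit threshold — equivalently every $ n $ above an explicit bound $ n_0(k,a) $ — lies in a union of at most $ N(k,a) $ proper linear subspaces of $ \PP^{ka-1} $, with $ N(k,a) $ the explicit subspace count from the theorem.

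It remains to recurse. The $ n \le n_0(k,a) $ are counted directly. For each of the $ \le N(k,a) $ proper subspaces $ T $, the points $ \ubf_n \in T $ satisfy, besides $ L(\ubf_n)=0 $, a second linear relation independent of $ L $ (independence holds because $ T $ is proper, hence cannot contain the whole line of points), and eliminating one coordinate block between the two relations produces — after clearing the polynomial-in-$ n $ factor that the elimination introduces and, where needed, splitting $ n $ into a bounded number of residue classes to restore a genuine recurrence — a non-degenerate linear recurrence with at most $ k-1 $ distinct roots of multiplicity $ \le a $ (non-degeneracy is automatic since no ratio $ \beta_i/\beta_j $ is a root of unity). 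Applying the inductive bound $ c(k-1,a) $ to each, and collecting the $ O(a) $ trivial $ n $, the $ n_0(k,a) $ small $ n $, and the $ N(k,a) $ subspaces each weighted by the splitting factor, one gets a recursion in $ k $ which, with base case $ c(1,a) \le a-1 $, has after crude estimation the tower-exponential shape $ e^{(7k^a)^{8k^a}} $.

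The main obstacle, and the step needing genuine care, is this recursion: extracting from ``$ \ubf_n $ lies in a fixed proper subspace'' an honest shorter non-degenerate recurrence, since the elimination naturally divides by polynomials in $ n $, which is not a recurrence operation, and since one must control precisely how $ k $ degrades, how $ a $ behaves, and how many residue classes appear, in order to end with a fully explicit constant rather than mere finiteness. The other heavy point is making the quantitative Subspace theorem furnish an explicit $ N(k,a) $ together with an explicit height threshold, as opposed to the qualitative version that suffices for Theorems \ref{p5-thm:mainresult}--\ref{p5-thm:red} of this paper.
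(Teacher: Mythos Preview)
The paper does not prove this statement at all: Theorem~\ref{p5-thm:zerobound} is quoted in the Preliminaries section as Schmidt's result from \cite{schmidt-1999} and is used as a black box (only to guarantee that the $a_i(\gamma_1^n,\ldots,\gamma_r^n)$ are nonzero for large $n$). So there is nothing in the paper to compare your sketch against; you are attempting to reproduce Schmidt's original argument, which lies entirely outside the scope of this article.

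As for the sketch itself, the overall architecture (specialize to a number field, feed the zero equation into a quantitative Subspace-type theorem, recurse on the number of roots) is indeed the shape of Schmidt's proof, but one step is not right as written. After dividing by $\alpha_k^n$ you assert that the archimedean contributions are controlled ``by $\abs{\beta_i}<1$''. Nothing in the hypotheses gives you $\abs{\beta_i}<1$: you relabelled so that $p_k\not\equiv 0$, not so that $\alpha_k$ is dominant, and in any case a general non-degenerate recurrence need not have a single dominant root at all. This looks like contamination from the standing hypothesis $\abs{\gamma_i}<1$ in Theorems~\ref{p5-thm:mainresult}--\ref{p5-thm:red}, which is precisely the dominant-root assumption that makes \emph{those} proofs go through and which Schmidt's theorem does \emph{not} get to assume. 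Without it your product-of-linear-forms estimate over the archimedean places does not hold, and the Subspace machine does not start. Schmidt's actual route avoids any dominant-root hypothesis by working instead with uniform bounds on non-degenerate solutions of linear equations in a multiplicative group of bounded rank (the Evertse--Schlickewei--Schmidt circle of results), and the passage from polynomial coefficients $p_i(n)$ to that setting is handled differently from the ``divide by $\alpha_k^n$ and bound'' manoeuvre you propose.
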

	
	Given two vectors $ \xbf = (x_1,\ldots,x_k) $ and $ \ibf = (i_1,\ldots,i_k) $, we use the abbreviation $ \xbf^{\ibf} = x_1^{i_1} \cdots x_k^{i_k} $.
	For a number field $ K $ and a finite set $ S $ of absolute values (containing the archimedean ones) we denote the absolute logarithmic Weil height of an element $ x \in K^* $ by
	\begin{equation*}
		h(x) = \sum_{v} \max \setb{0, \log \abs{x}_v}
	\end{equation*}
	and the $ S $-height by
	\begin{equation*}
		h_S(x) = \sum_{v \notin S} \max \setb{0, \log \abs{x}_v},
	\end{equation*}
	where the sums are taken over all places of $ K $.
	Moreover, for a vector $ \xbf $ we denote by $ h(\xbf) $ the usual projective logarithmic height and by $ \widehat{h}(\xbf) $ the sum of the heights of the coordinates of $ \xbf $.
	We use the Landau symbols $ O $ and $ o $ in the usual way, i.e. $ f(n) = O(g(n)) $ if there is a constant $ C $ such that $ f(n) \leq Cg(n) $ for all $ n $, and $ f(n) = o(g(n)) $ if $ f(n)/g(n) \rightarrow 0 $ as $ n $ goes to infinity.
	The following theorem is an essential part of our proof. It is proven as Theorem 1 in \cite{corvaja-zannier-2005} by Corvaja and Zannier:
	\begin{mythm}
		\label{p5-thm:infinmanycosets}
		Let $ f(\Xbf) = \sum_{\ibf} a_{\ibf} \Xbf^{\ibf} $ be a power series with algebraic coefficients in $ \CC_{\nu} $ converging in a neighborhood of the origin in $ \CC_{\nu}^k $.
		Let $ S $ be a finite set of absolute values of $ K $ containing the archimedean ones.
		Let $ \xbf_n = (x_{n1},\ldots,x_{nk}) $ $ (n=1,2,\ldots) $ be a sequence in $ {K^*}^k $, tending to zero in $ K_{\nu}^k $ and such that $ f(\xbf_n) $ is defined and belongs to $ K $.
		Suppose that:
		\begin{enumerate}[1)]
			\item For $ i = 1,\ldots,k $ we have $ h_S(x_{ni}) + h_S(x_{ni}^{-1}) = o(h(x_{ni})) $ as $ n \rightarrow \infty $.
			\item $ \widehat{h}(\xbf_n) = O(-\log (\max_i \abs{x_{ni}}_{\nu})) $.
			\item $ h_S(f(\xbf_n)) = o(h(\xbf_n)) $.
			\item $ h(f(\xbf_n)) = O(h(\xbf_n)) $.
		\end{enumerate}
		Then there exists a finite number of cosets $ \ubf_1H_1, \ldots, \ubf_tH_t \subseteq \GG_{\m}^k $ such that $ \set{\xbf_n}_{n \in \NN} $ $ \subseteq \bigcup_{i=1}^{t} \ubf_iH_i $ and such that, for $ i=1,\ldots,t $, the restriction of $ f(\Xbf) $ to $ \ubf_iH_i $ coincides with a polynomial in $ K[\Xbf] $.
	\end{mythm}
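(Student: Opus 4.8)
The plan is to reduce the reducibility question to an application of Theorem~\ref{p5-thm:mainresult}, exactly as is done classically in the Hilbert irreducibility circle of ideas. The ``if'' direction is essentially trivial: if $g(\gamma_1^n,\ldots,\gamma_r^n,Z) = h_1(n,Z)h_2(n,Z)$ for all $n$ in an infinite arithmetic progression $\Pc$, then $g(\gamma_1^n,\ldots,\gamma_r^n,Z)$ is reducible in $K[Z]$ for all those $n$, provided $h_1,h_2$ are not units; since they are monic with $\deg h_1 + \deg h_2 = d$ and each has positive degree, this is clear. (One should note that the coefficients of the $h_i$, being linear recurrences, take values in a fixed number field $L \supseteq K$ for the relevant $n$, but since $g(\gamma_1^n,\ldots,\gamma_r^n,Z) \in K[Z]$ splits off the factor and $K[Z]$ is a PID, the factorization can be taken in $K[Z]$.)

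For the ``only if'' direction, suppose $g(\gamma_1^n,\ldots,\gamma_r^n,Z)$ is reducible in $K[Z]$ for infinitely many $n$. Write $\gamma = (\gamma_1,\ldots,\gamma_r)$ and $\gamma^n = (\gamma_1^n,\ldots,\gamma_r^n)$. Since $g$ is monic in $Z$ of degree $d$, a reducibility $g(\gamma^n,Z) = p(Z)q(Z)$ with $p,q$ monic of degrees $e$ and $d-e$ forces each root of $p$ to be a root of $g(\gamma^n,Z)$, hence an $S$-integer whenever it lies in $K$ (as $g(\gamma^n,Z)$ is monic with $S$-integer coefficients). First I would use a pigeonhole argument: there is a fixed degree $e \in \{1,\ldots,d-1\}$ and an infinite set $\mathcal{N}$ of $n$ such that $g(\gamma^n,Z)$ has a monic factor of degree exactly $e$ in $K[Z]$; passing to the $e$-th symmetric-function / resultant reformulation, such a factor corresponds to a choice of $e$ of the (algebraic) roots of $g(\gamma^n,Z)$ whose elementary symmetric functions all lie in $K$. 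The key observation is that the roots of $g(\gamma^n,Z)$ are, for $n$ large, given by the convergent Puiseux/power-series branches of $g$ near the point $\gamma^n \to 0$: concretely, Theorem~\ref{p5-thm:mainresult} (applied with $a_0 = 1$, so $z' = z$) tells us that every $S$-integer solution $z$ of $g(\gamma^n,z)=0$ with $n$ large has the form $z = P_i(\gamma^n)$ for one of finitely many polynomials $P_i$ and $\gamma^n$ in one of finitely many cosets $u_iH_i$. Hence along $\mathcal{N}$, after refining to an infinite subset, all the rational roots of $g(\gamma^n,Z)$ — and in particular the roots of the degree-$e$ factor, \emph{if} that factor splits over $K$ — are simultaneously described by fixed polynomials evaluated at $\gamma^n$, and $\gamma^n$ ranges over a fixed coset $uH$; intersecting with the progression structure of $\{n : \gamma^n \in uH\}$ yields an arithmetic progression $\Pc$.

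The main obstacle is that the degree-$e$ factor of $g(\gamma^n,Z)$ need not split into linear factors over $K$ — it may be an irreducible polynomial of degree $e > 1$ in $K[Z]$, so its roots are not $K$-rational and Theorem~\ref{p5-thm:mainresult} does not directly apply to them. To handle this I would pass from $K$ to the splitting field: enlarge $K$ to a number field $K'$ over which $g$ factors into linear factors generically, i.e. replace the ground field by a field over which the relevant Puiseff branches are defined, and correspondingly enlarge $S$ to $S'$; then the hypotheses of Theorem~\ref{p5-thm:mainresult} (the non-multiple-root condition at the origin, the $S$-unit condition, the non-root-of-unity condition on ratios) are all preserved, and every root of $g(\gamma^n,Z)$ in $\overline{K}$ of bounded height lies in $K'$ and is an $S'$-integer for $n$ large. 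Applying Theorem~\ref{p5-thm:mainresult} over $(K',S')$ expresses \emph{all} roots of $g(\gamma^n,Z)$ (with $n$ large, $\gamma^n$ in a fixed coset after refinement) as fixed polynomials $P_1(\gamma^n),\ldots,P_d(\gamma^n)$ evaluated at $\gamma^n$. Now a reducibility of $g(\gamma^n,Z)$ over $K$ corresponds to a subset $I \subseteq \{1,\ldots,d\}$ of size $e$ such that the elementary symmetric functions of $\{P_i(\gamma^n) : i \in I\}$ all lie in $K$; by pigeonhole the \emph{same} subset $I$ works for infinitely many $n$, and then $h_1(n,Z) := \prod_{i \in I}(Z - P_i(\gamma^n))$ and $h_2(n,Z) := \prod_{i \notin I}(Z - P_i(\gamma^n))$ are monic polynomials whose coefficients are symmetric functions of powers of the $\gamma_i$ — i.e. themselves linear recurrences with algebraic roots and algebraic coefficients (using that $\gamma^n$ lies in the fixed coset $uH$, which restricts $n$ to an arithmetic progression, and that $P_i$ is a polynomial, so $P_i(\gamma^n)$ is a power sum in $n$). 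This yields the factorization $g(\gamma^n,Z) = h_1(n,Z)h_2(n,Z)$ valid for all $n$ in the arithmetic progression $\Pc$ (once established for infinitely many $n$ in $\Pc$, it holds for all, since two power-sum polynomial identities agreeing infinitely often on a progression agree identically there — a standard consequence of Theorem~\ref{p5-thm:zerobound} applied coefficient-wise). Finally, one checks the coefficients are actually in $K$ and not merely in $K'$: the product $h_1 h_2 = g(\gamma^n,Z)$ has $K$-coefficients and the factorization in $K[Z]$ exists by hypothesis for infinitely many $n$, so after possibly regrouping the branches $P_i$ into $\Gal(K'/K)$-stable blocks, $h_1$ and $h_2$ descend to $K[Z]$ (equivalently, one chooses $I$ to be a union of Galois orbits). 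This completes the argument; the non-monic case is then obtained, as indicated in the discussion following the theorem, by passing to $\widetilde{g}$.
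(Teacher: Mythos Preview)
Your proposal does not address the stated theorem at all. The statement you were asked to prove is Theorem~\ref{p5-thm:infinmanycosets}, the Corvaja--Zannier result on $S$-unit points on analytic hypersurfaces: given a power series $f(\Xbf)$ and a sequence $\xbf_n \to 0$ satisfying the four height conditions, one must produce finitely many cosets $\ubf_iH_i$ covering $\{\xbf_n\}$ on each of which $f$ coincides with a polynomial. Your write-up never mentions the power series $f$, the sequence $\xbf_n$, or any of conditions 1)--4); it is instead an argument for Theorem~\ref{p5-thm:red} (the reducibility statement for $g(\gamma_1^n,\ldots,\gamma_r^n,Z)$). In the paper, Theorem~\ref{p5-thm:infinmanycosets} is not proven at all: it is quoted as Theorem~1 of \cite{corvaja-zannier-2005} and used as a black box. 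So there is no ``paper's own proof'' to compare against, and your proposal is simply off-target.

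If one reads your argument as an attempted proof of Theorem~\ref{p5-thm:red}, it is in the right spirit but differs from the paper's route in one essential way, and that difference creates a gap. The paper does \emph{not} reduce to Theorem~\ref{p5-thm:mainresult}; it applies Theorem~\ref{p5-thm:infinmanycosets} directly to the \emph{coefficients} of the factors $h_1(n,Z),h_2(n,Z)$, which lie in $K$ by hypothesis and are shown to be $S$-integers via the Gauss norm (Lemma~1.6.3 in \cite{bombieri-gubler-2006}). You instead try to apply Theorem~\ref{p5-thm:mainresult} to the individual \emph{roots} of $g(\gamma^n,Z)$ after passing to an enlarged field $K'$. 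The problem is that there is no single finite extension $K'/K$ guaranteed a priori to contain all roots of $g(\gamma^n,Z)$ for all (or even infinitely many) $n$; the splitting field of $g(\gamma^n,Z)$ over $K$ may vary with $n$, and the sentence ``enlarge $K$ to a number field $K'$ over which $g$ factors into linear factors generically'' is not justified. The paper circumvents this by working with the power-series branches from the Implicit Function Theorem and with the $K$-rational symmetric functions, never needing the roots themselves to lie in a fixed number field. Your Galois-descent step at the end (choosing $I$ to be a union of Galois orbits) presupposes exactly the control over $K'$ that has not been established.
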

	
	The logarithmic height defined above has some properties which we list in the next lemma. For a proof of these properties we refer to \cite{zannier-2009}:
	\begin{mylemma}
		\label{p5-lemma:heightprop}
		For the above defined logarithmic height the following holds:
		\begin{enumerate}[a)]
			\item $ h(x) \geq 0 $.
			\item $ h(1/x) = h(x) $ and $ h(x^m) = \abs{m} \cdot h(x) $ for all $ m \in \ZZ $.
			\item $ h(x_1 \cdots x_r) \leq h(x_1) + \cdots + h(x_r) $.
			\item $ h(x_1 + \cdots + x_r) \leq h(1 : x_1 : \ldots : x_r) + \log r $.
			\item $ h(x^{\sigma}) = h(x) $ for all $ \sigma \in \Gal(\overline{\QQ}/\QQ) $.
			\item $ \max_{i=1,\ldots,r} h(x_i) \leq h(1 : x_1 : \ldots : x_r) \leq h(x_1) + \cdots + h(x_r) $.
		\end{enumerate}
	\end{mylemma}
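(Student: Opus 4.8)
The plan is to verify each of the six properties (a)--(f) directly from the definition of the absolute logarithmic Weil height, using only elementary properties of absolute values and the product formula on the number field $ K $. Throughout, we may assume all relevant elements lie in a fixed number field; each $ |\cdot|_v $ is normalized so that the product formula $ \sum_v \log|x|_v = 0 $ holds for $ x \in K^* $, and the sum defining $ h $ runs over all places.

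\emph{Properties (a), (b), (e).} Part (a) is immediate since each summand $ \max(0,\log|x|_v) $ is nonnegative. For (b), the key identity is $ \max(0,\log|x^{-1}|_v) = \max(0,-\log|x|_v) = \max(0,\log|x|_v) - \log|x|_v $; summing over all $ v $ and invoking the product formula $ \sum_v \log|x|_v = 0 $ yields $ h(1/x) = h(x) $. The formula $ h(x^m) = |m|\,h(x) $ then follows from $ \log|x^m|_v = m\log|x|_v $ for $ m \ge 0 $ (so $ \max(0,\log|x^m|_v) = m\max(0,\log|x|_v) $), combined with $ h(1/x)=h(x) $ to handle $ m<0 $, and triviality for $ m=0 $. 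Part (e) holds because a Galois automorphism $ \sigma $ permutes the places of $ K $ lying over each rational place while preserving the normalized absolute values (i.e. $ |x^\sigma|_{v\circ\sigma^{-1}} = |x|_v $), so the multiset of local contributions is merely reindexed.

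\emph{Properties (c), (d), (f).} For (c), at each place $ v $ the (multiplicative) triangle or ultrametric inequality gives $ |x_1\cdots x_r|_v \le \prod_i \max(1,|x_i|_v)^{e_v} $ for an appropriate normalization exponent, hence $ \log|x_1\cdots x_r|_v \le \sum_i \log^+|x_i|_v $ where $ \log^+ = \max(0,\log\cdot) $; taking $ \log^+ $ of the left side (which only drops terms or leaves the bound valid) and summing over $ v $ gives $ h(x_1\cdots x_r) \le \sum_i h(x_i) $. For (d), at each $ v $ one has $ |x_1+\cdots+x_r|_v \le r^{\varepsilon_v}\max(1,|x_1|_v,\ldots,|x_r|_v) $ where $ \varepsilon_v=1 $ at archimedean places and $ 0 $ otherwise; dividing through to pass to the projective height $ h(1:x_1:\cdots:x_r) = \sum_v \log\max(1,|x_1|_v,\ldots,|x_r|_v) $ and bounding $ \sum_{v\mid\infty}\log r \le \log r $ (after the normalization is accounted for) yields the claim. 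Finally (f): the lower bound is clear since $ \max(1,|x_1|_v,\ldots,|x_r|_v) \ge \max(1,|x_i|_v) $ termwise for each fixed $ i $, so $ h(1:x_1:\cdots:x_r) \ge h(x_i) $ for every $ i $; the upper bound follows from $ \max(1,|x_1|_v,\ldots,|x_r|_v) \le \prod_i \max(1,|x_i|_v) $ at each $ v $, summed over $ v $.

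The main (and essentially only) obstacle is purely bookkeeping: keeping the normalization of the absolute values consistent so that both the product formula and the triangle-inequality constants ($ r $ or $ 2 $ at archimedean places, $ 1 $ at non-archimedean places) come out correctly, and confirming that the sum over \emph{all} places in the stated definition $ h(x)=\sum_v\max(0,\log|x|_v) $ indeed agrees with the standard Weil height so that the product formula is applicable in (b). Since all of this is standard, the cleanest exposition is simply to cite \cite{zannier-2009} for the detailed verifications, as the paper does, and to record here only that each item reduces to the local inequalities and the product formula as sketched above.
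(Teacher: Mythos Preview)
Your proposal is correct; the paper itself gives no proof of this lemma but simply refers to \cite{zannier-2009}, and your sketch reproduces exactly the standard local-to-global arguments (product formula for (b), local triangle/ultrametric bounds for (c), (d), (f), Galois-permutation of places for (e)) that one finds there. In other words, you have written out what the paper leaves to the reference, and the two are in complete agreement.
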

	
	Another result that we will need later on is the Hadamard Quotient Theorem. When using this, we denote by $ \Hc(K) $ the Hadamard ring, i.e. the set of sequences in $ K $ satisfying a linear recurrence relation. Let us first state the following version from \cite{ferretti-zannier-2007}:
	\begin{mythm}[\textbf{Hadamard Quotient Theorem}]
		\label{p5-thm:hadamardquotient}
		Let $ K $ be a field of characteristic zero and let $ b(n), c(n) \in \Hc(K) $.
		Let $ (a_n) $ be a sequence whose elements are in a subring $ R $ of $ K $ which is finitely generated over $ \ZZ $, and suppose that $ a_n = \frac{b(n)}{c(n)} $ whenever the quotient is defined.
		Then there exists an element $ a(n) \in \Hc(K) $ such that $ a(n) = a_n $ for every $ n $ such that $ c(n) \neq 0 $.
	\end{mythm}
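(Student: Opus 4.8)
The plan is to derive the statement from Theorem~\ref{p5-thm:infinmanycosets}, in the spirit of Corvaja and Zannier. First I would reduce to the case where $ K $ is a number field and $ R = \Oc_S $ for a finite set $ S $ of places. Since $ R $ is finitely generated over $ \ZZ $, one specializes the finitely many transcendental generators to algebraic values without destroying the identity $ a_n = b(n)/c(n) $, and then enlarges $ S $ so that all characteristic roots and all Binet coefficients of $ b $ and $ c $ are $ S $-units; consequently $ h_S(a_n) = 0 $ for all $ n $, and $ h_S $ of any fixed power of a root vanishes as well. By Theorem~\ref{p5-thm:zerobound} the set $ \set{n : c(n) = 0} $ is finite and can be set aside, and by splitting $ \NN $ into finitely many arithmetic progressions I may assume $ b $ and $ c $ are non-degenerate; it is enough to argue on one progression at a time, which I rename $ \NN $.

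The core step is to realize $ a_n $ as the value of a convergent power series at a point tending to the origin. After passing to further arithmetic progressions I would arrange, for a suitable absolute value $ \nu $, a \emph{dominant root} for the quotient: a root $ \gamma_1 $ of $ c $ with $ \abs{\gamma_1}_\nu > \abs{\gamma_j}_\nu $ for $ j \neq 1 $ and a root $ \beta_1 $ of $ b $ with $ \abs{\beta_1}_\nu \geq \abs{\beta_i}_\nu $ for all $ i $, so that, writing $ c(n) = c_1 \gamma_1^n (1 + w_n) $ with $ w_n = \sum_{j \geq 2} (c_j/c_1)(\gamma_j/\gamma_1)^n $, the vector $ \xbf_n $ whose entries are the ratios $ (\gamma_j/\gamma_1)^n $ and $ (\beta_i/\gamma_1)^n $ tends to $ 0 $ in $ K_\nu $. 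Expanding $ 1/(1+w_n) = \sum_{k \geq 0} (-w_n)^k $ and multiplying by $ b(n)/(c_1 \gamma_1^n) $ produces a power series $ f $ with algebraic coefficients, convergent near the origin, with $ f(\xbf_n) = a_n $ for all large $ n $. I would then verify the four hypotheses of Theorem~\ref{p5-thm:infinmanycosets}: conditions 1) and 3) because the relevant $ S $-heights vanish, and conditions 2) and 4) from the elementary bounds $ \widehat{h}(\xbf_n) = O(n) $, $ -\log \max_j \abs{x_{nj}}_\nu \asymp n $ and $ h(a_n) \leq h(b(n)) + h(c(n)) = O(n) $, using Lemma~\ref{p5-lemma:heightprop}.

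Theorem~\ref{p5-thm:infinmanycosets} then gives finitely many cosets $ \ubf_i H_i \subseteq \GG_{\m}^k $ covering $ \set{\xbf_n} $, on each of which $ f $ equals a polynomial $ P_i $. For each $ i $ the set of $ n $ with $ \xbf_n \in \ubf_i H_i $ is a finite union of arithmetic progressions, and on each of them $ a_n = P_i(\xbf_n) $ is a polynomial in powers of fixed algebraic numbers, hence the restriction of a linear recurrence. Patching the finitely many resulting progressions, by passing to the least common multiple of their moduli, produces $ a(n) \in \Hc(K) $ with $ a(n) = a_n $ for all large $ n $. Finally $ c(n) a(n) - b(n) \in \Hc(K) $ vanishes for all large $ n $, so by Theorem~\ref{p5-thm:zerobound} it is identically zero, whence $ a(n) = a_n $ for every $ n $ with $ c(n) \neq 0 $.

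The obstacle I expect to be hardest is the local expansion: producing, after restriction to arithmetic progressions, a dominant root and hence a genuinely convergent power series. If no root of $ c $ dominates at a single place, one must combine several absolute values with a finer bookkeeping (or fall back on the analytic argument of van der Poorten and Rumely); everything past that point is a routine combination of Theorem~\ref{p5-thm:infinmanycosets} with the zero–multiplicity bound.
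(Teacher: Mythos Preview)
The paper does not prove Theorem~\ref{p5-thm:hadamardquotient}; it is quoted as a known result from \cite{ferretti-zannier-2007}, and the variant actually used in the proof of Theorem~\ref{p5-thm:solofpolyexp} is likewise cited from \cite{corvaja-zannier-2002-2}. So there is no ``paper's own proof'' to compare against.

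As for your proposal itself, it is a sketch with a genuine gap precisely at the point you flag. The reduction to a convergent power series via a dominant root is not available in general: even after restricting to arithmetic progressions, the roots of $c$ need not admit a place $\nu$ at which one of them strictly dominates all the others, and more seriously there is no reason why the dominant root $\gamma_1$ of $c$ should satisfy $\abs{\beta_i/\gamma_1}_\nu < 1$ for the roots $\beta_i$ of $b$. Without that, the coordinates $(\beta_i/\gamma_1)^n$ of your vector $\xbf_n$ do not tend to zero and Theorem~\ref{p5-thm:infinmanycosets} does not apply. Your fallback (``combine several absolute values'' or invoke van der Poorten--Rumely) is essentially an admission that the argument is incomplete; the classical proofs of the Hadamard Quotient Theorem are substantially more delicate than an application of Theorem~\ref{p5-thm:infinmanycosets} and do not reduce to it. The specialization step from a general field of characteristic zero to a number field also needs justification (one must preserve non-degeneracy and the nonvanishing of $c(n)$), though this part is standard. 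In short, your outline recovers at best the dominant-root special case, not the full statement.
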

	
	Corvaja and Zannier proved in \cite{corvaja-zannier-2002-2} a variant of the Hadamard Quotient Theorem:
	If the characteristic roots of the simple linear recurrences $ b(n) $ and $ c(n) $ generate together a torsion-free multiplicative group and if for infinitely many $ n $ we have $ c(n) \neq 0 $ as well as $ \frac{b(n)}{c(n)} \in R $, then the sequence $ n \mapsto \frac{b(n)}{c(n)} $ is a linear recurrence sequence.
	We will use this variant in our proof.
	
	Last but not least we will make use of a suitable version of the Implicit Function Theorem. For more information about this and other versions of the Implicit Function Theorem we refer to \cite{krantz-parks-2002-1} and \cite{krantz-parks-2002-2}.
	In the formulation of the theorem we use for a multiindex $ \alpha = (\alpha_1,\ldots,\alpha_r) \in \NN^r $ the notation
	\begin{equation*}
		\abs{\alpha} = \alpha_1 + \cdots + \alpha_r
	\end{equation*}
	and write $ 0 $ as a shortcut for $ (0,\ldots,0) $:
	\begin{mythm}[\textbf{Implicit Function Theorem}]
		\label{p5-thm:implicitfunc}
		Suppose the power series
		\begin{equation*}
			F(x_1,\ldots,x_r,y) = \sum_{\abs{\alpha} \geq 0, k \geq 0} a_{\alpha,k} x_1^{\alpha_1} \cdots x_r^{\alpha_r} y^k
		\end{equation*}
		is absolutely convergent for $ \abs{x_1} + \cdots + \abs{x_r} \leq R_1 $, $ \abs{y} \leq R_2 $. If
		\begin{equation*}
			a_{0,0} = 0 \text{ and } a_{0,1} \neq 0
		\end{equation*}
		then there exist $ r_0 > 0 $ and a power series
		\begin{equation}
			\label{p5-eq:implfuncpowser}
			f(x_1,\ldots,x_r) = \sum_{\abs{\alpha} > 0} c_{\alpha} x_1^{\alpha_1} \cdots x_r^{\alpha_r}
		\end{equation}
		such that \eqref{p5-eq:implfuncpowser} is absolutely convergent for $ \abs{x_1} + \cdots + \abs{x_r} \leq r_0 $ and
		\begin{equation*}
			F(x_1,\ldots,x_r,f(x_1,\ldots,x_r)) = 0.
		\end{equation*}
		Moreover, if the coefficients of $ F $ are algebraic, then the coefficients of $ f $ are also algebraic.
	\end{mythm}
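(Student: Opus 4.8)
The plan is to run the classical Cauchy method of majorants, together with a bookkeeping step that tracks the field generated by the coefficients and thereby yields the algebraicity statement. First I would produce the formal power series solution. Writing $x$ for $(x_1,\ldots,x_r)$, divide $F(x,y)=0$ by $a_{0,1}$ and rearrange to the equivalent equation $y=G(x,y)$, where
\[
G(x,y)=y-a_{0,1}^{-1}F(x,y)=\sum_{(\alpha,k)}g_{\alpha,k}\,x_1^{\alpha_1}\cdots x_r^{\alpha_r}y^k
\]
has $g_{0,0}=0$ (because $a_{0,0}=0$), $g_{0,1}=0$ (because the $y$-linear coefficient of $F$ equals $a_{0,1}$), and $g_{\alpha,k}=-a_{0,1}^{-1}a_{\alpha,k}$ otherwise. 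Substituting a formal series $f=\sum_{\abs{\alpha}>0}c_\alpha x_1^{\alpha_1}\cdots x_r^{\alpha_r}$ without constant term into $f=G(x,f)$ and comparing the coefficient of the monomial $x_1^{\gamma_1}\cdots x_r^{\gamma_r}$ (abbreviated $x^\gamma$, multi-index notation), one checks that the only monomial on the right-hand side that could contribute $c_\gamma$ comes from $(\alpha,k)=(0,1)$, with coefficient $g_{0,1}=0$; hence $c_\gamma$ is uniquely determined and equals $Q_\gamma\setb{\{g_{\alpha,k}\},\{c_\beta:\abs{\beta}<\abs{\gamma}\}}$ for a polynomial $Q_\gamma$ with non-negative integer coefficients. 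Induction on $\abs{\gamma}$ then shows that every $c_\gamma$ lies in the field generated over $\QQ$ by $a_{0,1}^{-1}$ and the coefficients of $F$; in particular, if the latter are algebraic, so is each $c_\gamma$, which gives the last assertion of the theorem once convergence is established.

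Next I would prove convergence via majorants. Choosing $\rho_1,\rho_2>0$ with $r\rho_1\le R_1$ and $\rho_2\le R_2$, absolute convergence of $F$ at $(\rho_1,\ldots,\rho_1,\rho_2)$ makes $M:=\sum_{\alpha,k}\abs{a_{\alpha,k}}\rho_1^{\abs{\alpha}}\rho_2^k$ finite, so $\abs{g_{\alpha,k}}\le M'\rho_1^{-\abs{\alpha}}\rho_2^{-k}$ with $M'=M\abs{a_{0,1}}^{-1}$ for all $(\alpha,k)\ne(0,1)$. With $\sigma=x_1+\cdots+x_r$, the function
\[
\Phi(x_1,\ldots,x_r,y)=\frac{M'}{\setb{1-\sigma/\rho_1}\setb{1-y/\rho_2}}-M'-\frac{M'}{\rho_2}\,y
\]
has non-negative coefficients and vanishing $x^0y^0$- and $x^0y^1$-coefficients, and — since every multinomial coefficient is $\ge1$ — its coefficient $[\Phi]_{\alpha,k}$ of $x^\alpha y^k$ dominates $\abs{g_{\alpha,k}}$ for every $(\alpha,k)$; thus $\Phi$ majorizes $G$ and has the same structure. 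Clearing denominators in the majorant equation $Y=\Phi(x_1,\ldots,x_r,Y)$ turns it into a quadratic equation in $Y$ whose coefficients are analytic in $\sigma$ near $\sigma=0$, which at $\sigma=0$ has $Y=0$ as a simple root and nonvanishing discriminant; by the elementary one-variable implicit function theorem it therefore has a unique analytic solution $Y=\psi(\sigma)=\sum_{m\ge1}C_m\sigma^m$ with $\psi(0)=0$, convergent on a disc $\abs{\sigma}\le r_0$.

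Finally I would compare the two solutions. The formal solution of $Y=\Phi(x,Y)$ is unique for the same reason as before, hence coincides with the Taylor expansion of $\psi$; writing $C_\alpha=C_{\abs{\alpha}}\binom{\abs{\alpha}}{\alpha}$ for the coefficients of $\psi$ viewed as a series in $x_1,\ldots,x_r$, these satisfy $C_\gamma=Q_\gamma\setb{\{[\Phi]_{\alpha,k}\},\{C_\beta\}}$ with the same polynomials $Q_\gamma$ as above. Since each $Q_\gamma$ has non-negative integer coefficients, induction on $\abs{\gamma}$ gives simultaneously $C_\gamma\ge0$ and $\abs{c_\gamma}\le C_\gamma$, whence
\[
\sum_{\abs{\alpha}>0}\abs{c_\alpha}\abs{x_1}^{\alpha_1}\cdots\abs{x_r}^{\alpha_r}\le\sum_{\abs{\alpha}>0}C_\alpha\abs{x_1}^{\alpha_1}\cdots\abs{x_r}^{\alpha_r}=\psi\setb{\abs{x_1}+\cdots+\abs{x_r}},
\]
which converges whenever $\abs{x_1}+\cdots+\abs{x_r}\le r_0$. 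Shrinking $r_0$ so that in addition $r_0\le R_1$ and $\abs{f(x)}<\rho_2$ on this region (possible since $f(0)=0$), the composition $F(x_1,\ldots,x_r,f(x_1,\ldots,x_r))$ is a well-defined convergent power series, and it vanishes identically because $f=G(x,f)$ forces $F(x,f)=0$ as a formal identity. The only genuinely delicate point is combinatorial: verifying carefully that $\Phi$ is a majorant of $G$ and that the recursion polynomials $Q_\gamma$ are monotone in each argument. Bundling the $x_i$ into the single variable $\sigma$ is what keeps this tractable, the region of convergence being the simplex $\abs{x_1}+\cdots+\abs{x_r}\le R_1$ rather than a polydisc.
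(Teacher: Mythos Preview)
The paper does not supply its own proof of this statement: the Implicit Function Theorem is quoted as a preliminary result, with the reader referred to Krantz--Parks for details, so there is nothing in the paper to compare your argument against line by line.

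That said, your proof is correct and is precisely the classical Cauchy majorant argument one finds in those references. The reduction to $y=G(x,y)$ with $g_{0,0}=g_{0,1}=0$, the recursive determination of the $c_\gamma$ as polynomials $Q_\gamma$ with non-negative integer coefficients in the $g_{\alpha,k}$ and the lower $c_\beta$, the algebraicity conclusion from this recursion, and the majorant
\[
\Phi(x,y)=\frac{M'}{(1-\sigma/\rho_1)(1-y/\rho_2)}-M'-\frac{M'}{\rho_2}\,y
\]
leading to a quadratic in $Y$ with a simple root at the origin --- all of this is standard and carried out cleanly. The one point you flag as ``genuinely delicate'' (that $\Phi$ majorizes $G$ and that the $Q_\gamma$ are monotone) is handled correctly: the multinomial coefficient $\binom{\lvert\alpha\rvert}{\alpha}\ge 1$ gives the termwise domination, and non-negativity of the coefficients of $Q_\gamma$ gives both $C_\gamma\ge 0$ and $\lvert c_\gamma\rvert\le C_\gamma$ in one induction. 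Nothing is missing.
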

	
	This version of the Implicit Function Theorem was used as well in \cite{fuchs-scremin-2004} by the first author and Scremin. As they did there, we emphasize here again that the statement holds in a more general form:
	Suppose that $ F(x_1,\ldots,x_r,y) $ converges absolutely for $ \abs{x_1} + \cdots + \abs{x_r} \leq R_1 $ and that $ \abs{y-y_0} \leq R_2 $ for some $ y_0 \in \overline{\QQ} $ with $ F(0,\ldots,0,y_0) = 0 $. Then under the assumption that
	\begin{equation*}
		\frac{\partial F}{\partial y} (0,\ldots,0,y_0) \neq 0,
	\end{equation*}
	the conclusion is that there exists a power series
	\begin{equation*}
		f(x_1,\ldots,x_r) = \sum_{\abs{\alpha} \geq 0} c_{\alpha} x_1^{\alpha_1} \cdots x_r^{\alpha_r}
	\end{equation*}
	for which the same as above holds.
	
	\section{Proofs}
	\label{p5-sec:proofsec}
	
	In the proof of Theorem \ref{p5-thm:mainresult} we will use the following statement. Since this inequality is of independent applicability we state and prove it separately rather than tacitly claim and prove it within the big proof:
	\begin{mylemma}
		\label{p5-lemma:heightofzero}
		Let $ K $ be a number field and $ f(X) = b_0X^d + b_1X^{d-1} + \cdots + b_d \in K[X] $ a polynomial with $ b_0 \neq 0 $. Assume that $ \xi \in K $ is a zero of $ f $, i.e. $ f(\xi) = 0 $. Then we have for the logarithmic height of $ \xi $ the bound
		\begin{equation*}
			h(\xi) \leq h(1:b_0:b_1:\ldots:b_d) + \log d.
		\end{equation*}
	\end{mylemma}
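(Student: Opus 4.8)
The plan is to read off a local upper bound for $\abs{\xi}_v$ from the root equation $b_0\xi^d=-(b_1\xi^{d-1}+\cdots+b_d)$ at every place $v$ of $K$ and then sum over all places. We may assume $d\ge 1$, since for $d=0$ the polynomial $f=b_0\ne 0$ has no zero. Throughout we use the normalisation of the absolute values $\abs{\cdot}_v$ underlying the height defined above and in Lemma \ref{p5-lemma:heightprop}; in particular the product formula $\sum_v\log\abs{x}_v=0$ holds for $x\in K^*$, and the archimedean contributions of a constant $c>1$ sum to $\log c$ — this is exactly the mechanism producing the term $\log r$ in part d) of Lemma \ref{p5-lemma:heightprop}.

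For each place $v$ put $M_v=\max(1,\abs{b_0}_v,\ldots,\abs{b_d}_v)$, so that $h(1:b_0:\ldots:b_d)=\sum_v\log M_v$. If $\abs{\xi}_v\le 1$ then $\max(0,\log\abs{\xi}_v)=0\le\log M_v-\log\abs{b_0}_v$, the right-hand side being nonnegative because $M_v\ge\abs{b_0}_v$. If $\abs{\xi}_v>1$, then taking $v$-absolute values in the root equation and applying the triangle inequality — which costs nothing when $v$ is non-archimedean and costs a factor $d$, the number of summands, when $v$ is archimedean — together with $\abs{\xi}_v^{d-i}\le\abs{\xi}_v^{d-1}$ for $i\ge 1$, gives
\begin{equation*}
	\abs{b_0}_v\abs{\xi}_v^d\le C_v\cdot\abs{\xi}_v^{d-1}\max_{1\le i\le d}\abs{b_i}_v,\qquad C_v=\begin{cases}1,& v\nmid\infty,\\ d,& v\mid\infty.\end{cases}
\end{equation*}
Dividing by $\abs{b_0}_v\abs{\xi}_v^{d-1}\ne 0$ yields $\abs{\xi}_v\le C_vM_v/\abs{b_0}_v$, hence $\max(0,\log\abs{\xi}_v)\le\log C_v+\log M_v-\log\abs{b_0}_v$, and this bound now holds at every place (trivially in the first case since $\log C_v\ge 0$).

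Summing this inequality over all places $v$ of $K$ and using the product formula to eliminate $\sum_v\log\abs{b_0}_v=0$ — this is where $b_0\ne 0$ enters — gives
\begin{equation*}
	h(\xi)\le\sum_v\log M_v+\sum_{v\mid\infty}\log d=h(1:b_0:\ldots:b_d)+\log d,
\end{equation*}
which is the claim. There is no genuine obstacle here; the only points that demand a little care are the separation into archimedean and non-archimedean places (responsible for the additive $\log d$) and keeping the height normalisation consistent, so that $\sum_{v\mid\infty}\log d$ really equals $\log d$. Alternatively one could first replace $f$ by the monic polynomial $b_0^{-1}f$, apply the same local estimate to $\xi^d=-\big((b_1/b_0)\xi^{d-1}+\cdots+b_d/b_0\big)$, and then use $h(1:b_1/b_0:\ldots:b_d/b_0)=h(b_0:b_1:\ldots:b_d)\le h(1:b_0:\ldots:b_d)$; this variant trades the product formula for a short preliminary renormalisation and is equally short.
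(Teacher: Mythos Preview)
Your proof is correct. The one notational wrinkle is that, under the normalisation implicit in the paper (where $h(x)=\sum_v\max(0,\log\abs{x}_v)$ gives the absolute height), the archimedean triangle inequality actually produces a factor $d^{n_v/n}$ rather than $d$ at each $v\mid\infty$; but you flag this yourself, and since $\sum_{v\mid\infty}n_v/n=1$ the sum $\sum_{v\mid\infty}\log C_v$ is indeed $\log d$, so nothing is lost.

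Your route differs mildly from the paper's. You work place by place, extract a local bound $\max(0,\log\abs{\xi}_v)\le\log C_v+\log M_v-\log\abs{b_0}_v$, and then invoke the product formula to kill the $b_0$-term. The paper instead stays at the global height level: it applies the packaged inequalities of Lemma~\ref{p5-lemma:heightprop} to bound $h(\xi^d)=d\,h(\xi)$ by $h(1:b_1/b_0:\ldots:b_d/b_0)+(d-1)h(\xi)+\log d$ and then cancels $(d-1)h(\xi)$. This is exactly the ``alternative'' you sketch in your last paragraph, with the product formula hidden inside the identity $h(1:b_1/b_0:\ldots:b_d/b_0)=h(b_0:b_1:\ldots:b_d)$. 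So the two arguments are the same computation seen from two sides: yours makes the local mechanism and the role of the product formula explicit, while the paper's version is shorter once Lemma~\ref{p5-lemma:heightprop} is available as a black box.
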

	
	The proof of such an upper bound on the height of a zero of a polynomial is given as an exercise in \cite{zannier-2009}. For the sake of completeness we write it down in detail:
	\begin{proof}
		During these calculations we will apply the rules from Lemma \ref{p5-lemma:heightprop} several times without explicitly mentioning it.
		First we find the following upper bound for polynomial expressions:
		\begin{align*}
			h(b_d + b_{d-1}\xi &+ \cdots + b_1\xi^{d-1}) \leq \\
			&\leq h(1 : b_d : b_{d-1}\xi : \ldots : b_1\xi^{d-1}) + \log d \\
			&= \log \prod_{v} \max \setb{1, \abs{b_d}_v, \ldots, \abs{b_1\xi^{d-1}}_v} + \log d \\
			&\leq \log \prod_{v} \max \setb{1, \abs{b_d}_v, \ldots, \abs{b_1}_v} \cdot \max \setb{1, \abs{\xi}_v^{d-1}} + \log d \\
			&= \log \prod_{v} \max \setb{1, \abs{b_d}_v, \ldots, \abs{b_1}_v} + \log \prod_{v} \max \setb{1, \abs{\xi}_v^{d-1}} + \log d \\
			&= h(1:b_1:\ldots:b_d) + (d-1) \cdot h(\xi) + \log d.
		\end{align*}
		Since $ f(\xi) = 0 $ and $ b_0 \neq 0 $ we can write
		\begin{equation*}
			\xi^d = - \left( \frac{b_d}{b_0} + \frac{b_{d-1}}{b_0} \xi + \cdots + \frac{b_1}{b_0} \xi^{d-1} \right)
		\end{equation*}
		and thus
		\begin{align*}
			d \cdot h(\xi) &= h(\xi^d) = h\left( \frac{b_d}{b_0} + \frac{b_{d-1}}{b_0} \xi + \cdots + \frac{b_1}{b_0} \xi^{d-1} \right) \\
			&\leq h \left( 1:\frac{b_1}{b_0}:\ldots:\frac{b_d}{b_0} \right) + (d-1) \cdot h(\xi) + \log d.
		\end{align*}
		This yields
		\begin{align*}
			h(\xi) &\leq h \left( 1:\frac{b_1}{b_0}:\ldots:\frac{b_d}{b_0} \right) + \log d \\
			&= h(b_0:b_1:\ldots:b_d) + \log d \\
			&\leq h(1:b_0:b_1:\ldots:b_d) + \log d.
		\end{align*}
	\end{proof}
	
	Before writing down the proof of Theorem \ref{p5-thm:mainresult} in detail, we give a short overview.
	Considering an infinite sequence of solutions $ (n,z) $ for the equation in question we will show that the $ z $-component must be bounded. Afterwards we will calculate a bound on the height of the $ z $-component. These are preparations needed when applying in the sequel the Implicit Function Theorem and finally Theorem \ref{p5-thm:infinmanycosets}.
	
	\begin{proof}[Proof of Theorem \ref{p5-thm:mainresult}]
		Let $ K, g, \widetilde{g}, \gamma_1, \ldots, \gamma_r $ and $ S $ be as in the theorem. We write
		\begin{equation*}
			g(\gamma_1^n,\ldots,\gamma_r^n,Z) = a_0(\gamma_1^n,\ldots,\gamma_r^n) Z^d + \cdots + a_d(\gamma_1^n,\ldots,\gamma_r^n)
		\end{equation*}
		and
		\begin{equation*}
			\widetilde{g}(\gamma_1^n,\ldots,\gamma_r^n,\widetilde{Z}) = \widetilde{a_0}(\gamma_1^n,\ldots,\gamma_r^n) \widetilde{Z}^d + \cdots + \widetilde{a_d}(\gamma_1^n,\ldots,\gamma_r^n).
		\end{equation*}
		Since no ratio $ \gamma_i/\gamma_j $ is a root of unity, by Theorem \ref{p5-thm:zerobound} for $ n $ large enough we have $ a_i(\gamma_1^n,\ldots,\gamma_r^n) \neq 0 $ for all $ 1 \leq i \leq d $.
		As the $ \widetilde{a_j}(\gamma_1^n,\ldots,\gamma_r^n) $ arise by construction as products of the $ a_i(\gamma_1^n,\ldots,\gamma_r^n) $ they are non-zero as well for large $ n $ and all $ 1 \leq j \leq d $.
		Thus we will assume from here on that $ n $ is large enough such that all $ a_i $ (and all $ \widetilde{a_j} $) are non-zero.
		
		At this position we are going to split the proof into two cases: Let us assume that $ a_0(0,\ldots,0) \neq 0 $ and that $ g(0,\ldots,0,Z) $ has only simple zeros. In this case we work only with $ g $ and do not need the transformation $ \widetilde{g} $.
		The other case, when $ a_0(0,\ldots,0) = 0 $ and $ \widetilde{g}(0,\ldots,0,\widetilde{Z}) $ has only simple zeros, works in the completely same way considering $ \widetilde{g} $ instead of $ g $ with the transformation $ \widetilde{z} = a_0(\gamma_1^n,\ldots,\gamma_r^n) z $ and recognizing the fact that $ \widetilde{g}(\gamma_1^n,\ldots,\gamma_r^n,\widetilde{z}) $ is monic in $ \widetilde{z} $, i.e. $ \widetilde{a_0}(\gamma_1^n,\ldots,\gamma_r^n) = 1 \neq 0 $. Hence we will write down only the first case in detail.
		
		Consider now an infinite sequence $ ((n,z_n))_{n \in W} $ of solutions of the equation
		\begin{equation}
			\label{p5-eq:theeq}
			g(\gamma_1^n,\ldots,\gamma_r^n,z) = 0
		\end{equation}
		in $ (n,z) \in \NN \times \Oc_S $ with $ z \neq 0 $, where $ W $ is an infinite subset of $ \NN $.
		Since for fixed $ n $ there are at most $ d $ possible values for $ z $, all solutions are contained in finitely many such sequences. Therefore we can restrict our considerations to one of them.
		To simplify the notation we shall often write $ a_i(n) $ instead of $ a_i(\gamma_1^n,\ldots,\gamma_r^n) $.
		
		Our first step is to prove that the sequence $ (z_n) $ is bounded. Since $ z_n $ is a solution of \eqref{p5-eq:theeq} we have
		\begin{equation}
			\label{p5-eq:zerozn}
			a_0(n)z_n^d + \cdots + a_d(n) = 0.
		\end{equation}
		Using $ z_n \neq 0 $ this is equivalent to
		\begin{equation*}
			a_0(n)z_n = - a_1(n) - \cdots - a_d(n)z_n^{-(d-1)}.
		\end{equation*}
		For $ \abs{z_n} > 1 $ this yields
		\begin{equation*}
			\abs{a_0(n)z_n} \leq \abs{a_1(n)} + \cdots + \abs{a_d(n)}.
		\end{equation*}
		Hence we end up with the upper bound
		\begin{equation*}
			\abs{z_n} \leq \max \setb{1, \frac{\abs{a_1(n)} + \cdots + \abs{a_d(n)}}{\abs{a_0(n)}}}.
		\end{equation*}
		As $ a_0(0,\ldots,0) \neq 0 $ the denominator $ \abs{a_0(n)} $ is bounded away from zero, which gives us together with $ \abs{\gamma_i} < 1 $ the boundedness of the sequence $ (z_n) $.
		
		Based on the estimate
		\begin{align*}
			\abs{g(0,\ldots,0,z_n)} &= \abs{g(0,\ldots,0,z_n) - g(\gamma_1^n,\ldots,\gamma_r^n,z_n)} \\
			&\leq \sum_{i=0}^{d} \underbrace{\abs{a_i(0,\ldots,0) - a_i(\gamma_1^n,\ldots,\gamma_r^n)}}_{\overset{n \rightarrow \infty}{\longrightarrow} 0} \cdot \abs{z_n}^{d-i}
		\end{align*}
		the boundedness of $ z_n $ also implies that $ g(0,\ldots,0,z_n) \rightarrow 0 $ as $ n \rightarrow \infty $. Thus the $ z_n $ lie in the union of arbitrary small neighborhoods of the solutions of $ g(0,\ldots,0,z) = 0 $ for $ n $ large enough.
		Thus we can split the sequence into finitely many subsequences and consider in what follows only an infinite sequence $ (z_n) $ which converges to a solution $ z_* $ of $ g(0,\ldots,0,z) = 0 $.
		
		Before going on we will pause for a moment and derive an upper bound for the logarithmic height of $ z_n $ that will be useful later on.
		Remembering equation \eqref{p5-eq:zerozn} we get by Lemma \ref{p5-lemma:heightofzero} and Lemma \ref{p5-lemma:heightprop} the inequality
		\begin{align*}
			h(z_n) &\leq h(1 : a_0(n) : \ldots : a_d(n)) + \log d \\
			&\leq h(a_0(n)) + \cdots + h(a_d(n)) + \log d.
		\end{align*}
		Now we need a bound on $ h(a_i(n)) $. Using Lemma \ref{p5-lemma:heightprop} once again yields
		\begin{align*}
			h(a_i(n)) &= h \left( \sum_{k_1,\ldots,k_r} \lambda_{k_1,\ldots,k_r}^{(i)} \gamma_1^{nk_1} \cdots \gamma_r^{nk_r} \right) \\
			&\leq \sum_{k_1,\ldots,k_r} h \left( \lambda_{k_1,\ldots,k_r}^{(i)} \gamma_1^{nk_1} \cdots \gamma_r^{nk_r} \right) + C_{i,0} \\
			&\leq \sum_{k_1,\ldots,k_r} \left( h \left( \lambda_{k_1,\ldots,k_r}^{(i)} \right) + k_1 h(\gamma_1^n) + \cdots + k_r h(\gamma_r^n) \right) + C_{i,0} \\
			&= C_{i,0}' + C_{i,1} h(\gamma_1^n) + \cdots + C_{i,r} h(\gamma_r^n) \\
			&\leq C_{i,0}' + (C_{i,1} + \cdots + C_{i,r}) \cdot \max_{j=1,\ldots,r} h(\gamma_j^n) \\
			&\leq C_{i,0}' + C_i' \cdot h(1:\gamma_1^n:\ldots:\gamma_r^n)
		\end{align*}
		where all the constants depend only on $ a_i(X_1,\ldots,X_r) $. In particular, they are independent of $ n $.
		Combining the last two inequality chains we end up with the upper bound
		\begin{equation}
			\label{p5-eq:boundhzn}
			h(z_n) \leq C_1 + C_2 \cdot h(1:\gamma_1^n:\ldots:\gamma_r^n).
		\end{equation}
		
		Note that the condition that $ g(0,\ldots,0,Z) $ has no multiple zero as a polynomial in $ Z $ is equivalent to
		\begin{equation*}
			\frac{\partial g}{\partial Z} (0,\ldots,0,z_0) \neq 0
		\end{equation*}
		for all $ z_0 $ satisfying $ g(0,\ldots,0,z_0) = 0 $.
		Thus we can apply the Implicit Function Theorem \ref{p5-thm:implicitfunc} which gives a power series $ f(X_1,\ldots,X_r) $ with algebraic coefficients such that for $ n $ large enough we have
		\begin{equation*}
			z_n = f(\gamma_1^n,\ldots,\gamma_r^n).
		\end{equation*}
		
		Now we aim to apply Theorem \ref{p5-thm:infinmanycosets}.
		Therefore let us check the conditions of this theorem.
		We have a power series $ f(X_1,\ldots,X_r) $ with algebraic coefficients converging in a neighborhood of the origin with respect to the standard absolute value $ \abs{\cdot}_\nu = \abs{\cdot} $ in $ \CC $.
		The set $ S $ of finitely many absolute values including all archimedean ones is already given by the theorem we are going to prove.
		Define the vector $ \xbf_n $ to be $ (\gamma_1^{w_n},\ldots,\gamma_r^{w_n}) $ where $ w_n $ is the $ n $-th index (in increasing order) within the set $ W $.
		Then $ \xbf_n $ tends to zero, and $ f(\xbf_n) = z_{w_n} $ is defined and belongs to $ K $.
		
		It remains to check the four conditions involving heights. Since $ x_{ni} = \gamma_i^{w_n} $ is an $ S $-unit, we have $ h_S(x_{ni}) + h_S(x_{ni}^{-1}) = 0 $. Thus the first condition is satisfied.
		Consider now
		\begin{equation*}
			\widehat{h}((\gamma_1^{w_n},\ldots,\gamma_r^{w_n})) = \sum_{i=1}^{r} h(\gamma_i^{w_n}) = w_n \cdot \sum_{i=1}^{r} h(\gamma_i) = w_n \cdot C_3
		\end{equation*}
		as well as
		\begin{equation*}
			-\log \left( \max_{i=1,\ldots,r} \abs{\gamma_i^{w_n}} \right) = w_n \cdot \left( -\log \left( \max_{i=1,\ldots,r} \abs{\gamma_i} \right) \right) = w_n \cdot C_4
		\end{equation*}
		which yields $ \widehat{h}(\xbf_n) = O(-\log (\max_i \abs{x_{ni}}_{\nu})) $.
		For the third condition remember that $ z_{w_n} $ is an $ S $-integer (note that this still holds in the other case where we consider $ \widetilde{z}_{w_n} $).
		So for all valuations $ \mu \notin S $ we get $ \abs{z_{w_n}}_{\mu} \leq 1 $ and in the next step $ h_S(z_{w_n}) = 0 $. This implies $ h_S(f(\xbf_n)) = o(h(\xbf_n)) $.
		For the last condition recall that by inequality \eqref{p5-eq:boundhzn} we have
		\begin{equation*}
			h(f(\xbf_n)) \leq C_1 + C_2 \cdot h(1:\gamma_1^{w_n}:\ldots:\gamma_r^{w_n})
		\end{equation*}
		and thus the required bound holds.
		
		Therefore we can apply Theorem \ref{p5-thm:infinmanycosets} which gives us finitely many cosets $ \ubf_1H_1, \ldots,$ $ \ubf_tH_t \subseteq \GG_{\m}^r $ such that $ \set{(\gamma_1^{w_n},\ldots,\gamma_r^{w_n})}_{n \in \NN} $ $ \subseteq \bigcup_{i=1}^{t} \ubf_iH_i $ and such that, for $ i=1,\ldots,t $, the restriction of $ f $ to $ \ubf_iH_i $ coincides with a polynomial $ P_i $ in $ K[X_1,\ldots,X_r] $.
		Hence for all $ n \in W $ there exists an index $ i $ such that $ (\gamma_1^n,\ldots,\gamma_r^n) \in \ubf_iH_i $ and $ z_n = P_i(\gamma_1^n,\ldots,\gamma_r^n) $.
		
		As already noted at the beginning of this proof, the situation of the second case where $ a_0(0,\ldots,0) = 0 $ and $ \widetilde{g}(0,\ldots,0,\widetilde{Z}) $ has only simple zeros is handled in the same way.
		We start again with an infinite sequence $ ((n,z_n))_{n \in W} $ of solutions of the equation
		\begin{equation*}
			g(\gamma_1^n,\ldots,\gamma_r^n,z) = 0
		\end{equation*}
		in $ (n,z) \in \NN \times \Oc_S $ with $ z \neq 0 $, where $ W $ is an infinite subset of $ \NN $.
		The sequence now transforms under the transformation $ \widetilde{z} = a_0(\gamma_1^n,\ldots,\gamma_r^n) z $ to a sequence $ ((n,\widetilde{z}_n))_{n \in W} $ of solutions of
		\begin{equation*}
			\widetilde{g}(\gamma_1^n,\ldots,\gamma_r^n,\widetilde{z}) = 0.
		\end{equation*}
		From here on the steps are the same as in the previous case written down in detail.
	\end{proof}
	
	In the proof of Theorem \ref{p5-thm:solofpolyexp} we first handle some special cases which are not covered by Theorem \ref{p5-thm:mainresult}. After that Theorem \ref{p5-thm:mainresult} can be applied and it remains to classify the output in the intended framework. This is done by distinguishing some cases where in one of them the Hadamard Quotient Theorem is used.
	
	\begin{proof}[Proof of Theorem \ref{p5-thm:solofpolyexp}]
		As above we will denote by $ a_i(n) = a_i(\gamma_1^n,\ldots,\gamma_r^n) $ the coefficients of the polynomial $ g $.
		
		First note that for a fixed value of $ n $ the considered equation
		\begin{equation}
			\label{p5-eq:eqinquestion}
			g(\gamma_1^n,\ldots,\gamma_r^n,z) = 0
		\end{equation}
		has either only finitely many solutions $ z $ if not all $ a_i(n) $ are zero, or holds for all values of $ z $ if all the $ a_i(n) $ are zero.
		Thus for finitely many values of $ n $ the solutions $ (n,z) \in \NN \times \Oc_S $ of \eqref{p5-eq:eqinquestion} having the first component among that finite set can be classified according to the required pattern.
		Therefore we can always assume that $ n $ is large enough.
		
		For $ z = 0 $ equation \eqref{p5-eq:eqinquestion} reduces to $ a_d(n) = 0 $ which has only finitely many solutions in $ n $ since this linear recurrence is non-degenerate by assumption.
		So we can assume in the sequel that $ z \neq 0 $.
		
		It remains to classify the solutions of \eqref{p5-eq:eqinquestion} of the form $ (n,z) \in \NN \times \Oc_S $ with $ z \neq 0 $ and $ n $ large.
		Here we are able to apply Theorem \ref{p5-thm:mainresult} and get finitely many cosets $ \ubf_1H_1, \ldots,$ $ \ubf_tH_t \subseteq \GG_{\m}^r $ as well as for each coset $ \ubf_iH_i $ a polynomial $ P_i $ such that for all remaining solutions $ (n,z) $ of \eqref{p5-eq:eqinquestion} there is an index $ i \in \set{1,\ldots,t} $ with the property that either
		\begin{equation*}
			z = P_i(\gamma_1^n,\ldots,\gamma_r^n) \qquad \text{or} \qquad z = \frac{P_i(\gamma_1^n,\ldots,\gamma_r^n)}{a_0(\gamma_1^n,\ldots,\gamma_r^n)}.
		\end{equation*}
		
		For each $ i = 1,\ldots,t $ we have now four possible situations.
		First, there could exist only finitely many solutions $ (n,z) $ of \eqref{p5-eq:eqinquestion} satisfying $ z = P_i(\gamma_1^n,\ldots,\gamma_r^n) $. They will be contained in $ M $. This is also the case if there exist only finitely many solutions $ (n,z) $ of \eqref{p5-eq:eqinquestion} satisfying $ z = P_i(\gamma_1^n,\ldots,\gamma_r^n) / a_0(\gamma_1^n,\ldots,\gamma_r^n) $.
		
		In the third case we have for a fixed index $ i $ infinitely many solutions $ (n,z) $ of \eqref{p5-eq:eqinquestion} fulfilling the equation $ z = P_i(\gamma_1^n,\ldots,\gamma_r^n) $.
		Putting $ z = P_i(\gamma_1^n,\ldots,\gamma_r^n) $ into equation \eqref{p5-eq:eqinquestion} yields a linear recurrence sequence that is forced to be zero.
		By the theorem of Skolem-Mahler-Lech the set of zeros of this linear recurrence is a finite set together with finitely many arithmetic progressions.
		The finite set contributes to $ M $, whereas the arithmetic progressions $ \Pc_{i,j} $ represent the solutions $ \set{(n,P_i(\gamma_1^n,\ldots,\gamma_r^n)) : n \in \Pc_{i,j}, P_i(\gamma_1^n,\ldots,\gamma_r^n) \in \Oc_S} $. Note that $ P_i(\gamma_1^n,\ldots,\gamma_r^n) $ is a linear recurrence sequence.
		
		For the last possible case assume that for a fixed index $ i $ infinitely many solutions $ (n,z) $ of \eqref{p5-eq:eqinquestion} satisfy the equation $ z = P_i(\gamma_1^n,\ldots,\gamma_r^n) / a_0(\gamma_1^n,\ldots,\gamma_r^n) $.
		Now we plan to apply the above mentioned variant of the Hadamard Quotient Theorem \ref{p5-thm:hadamardquotient}. Let us therefore check the conditions for applying it.
		The recurrences occurring in numerator and denominator are both simple. By partitioning $ \NN $ into finitely many arithmetic progressions and reparametrizing we can assume that the characteristic roots generate a torsion-free multiplicative group.
		It is well known that the ring of $ S $-integers $ \Oc_S $ is finitely generated (consider for instance the decomposition $ \Oc_S = \Oc_K \cdot \Oc_S^* $ and recall that the sets of algebraic integers $ \Oc_K $ and of $ S $-units $ \Oc_S^* $ are both finitely generated) and we have $ z \in \Oc_S $.
		Thus the Hadamard Quotient Theorem can be applied and yields the existence of a linear recurrence $ \ell(n) $ such that $ z = \ell(n) $ for our infinitely many $ n $ (finitely many sequences if partitioning was necessary).
		From here on the procedure is the same as in case three.
	\end{proof}
	
	The proof of our third theorem uses a similar strategy as the proof of Theorem \ref{p5-thm:mainresult} above.
	
	\begin{proof}[Proof of Theorem \ref{p5-thm:red}]
		Let $ K, g, \gamma_1, \ldots, \gamma_r $ be as in the theorem. We write
		\begin{align*}
			g(\gamma_1^n,\ldots,\gamma_r^n,Z) &= Z^d + a_1(\gamma_1^n,\ldots,\gamma_r^n) Z^{d-1} + \cdots + a_d(\gamma_1^n,\ldots,\gamma_r^n) \\
			&= Z^d + a_1(n) Z^{d-1} + \cdots + a_d(n).
		\end{align*}
		We have fixed a finite set $ S $ of places of $ K $, containing all archimedean ones, and such that $ \gamma_1,\ldots,\gamma_r $ and all non-zero coefficients of $ a_i(X_1,\ldots,X_r) $ for $ i=1,\ldots,d $ are $ S $-units.
		Then all coefficients of $ g(\gamma_1^n,\ldots,\gamma_r^n,Z) $ as a polynomial in $ Z $ are $ S $-integers, i.e. have $ \nu $-valuation at most $ 1 $ for all $ \nu \notin S $.
		
		Now we assume that $ g(\gamma_1^n,\ldots,\gamma_r^n,Z) $ is reducible in $ K[Z] $ for infinitely many $ n \in \NN $.
		Denote an infinite set of such $ n $ by $ W_1 $.
		We have to prove that $ g(\gamma_1^n,\ldots,\gamma_r^n,Z) $ has a generic factorization along an arithmetic progression as stated in the theorem.
		Clearly, the other direction of Theorem \ref{p5-thm:red} is trivial.
		
		All solutions $ (n,z) \in \NN \times \overline{\QQ} $ of $ g(\gamma_1^n,\ldots,\gamma_r^n,z) = 0 $ are contained in $ d $ sequences of the shape $ ((n,z_n))_{n \in \NN} $.
		
		As in the proof of Theorem \ref{p5-thm:mainresult} it is shown that the inequality
		\begin{equation*}
			\abs{z_n} \leq \max \setb{1, \abs{a_1(n)} + \cdots + \abs{a_d(n)}}
		\end{equation*}
		holds which implies the boundedness of all $ d $ sequences $ (z_n) $ since $ \abs{\gamma_i} < 1 $.
		Again we can deduce from this boundedness that the $ z_n $ lie in the union of arbitrary small neighborhoods of the solutions of $ g(0,\ldots,0,z) = 0 $ for $ n $ large enough.
		Thus we can split the sequences into finitely many subsequences and consider in what follows only infinite sequences $ (n,z_n) $ which converge to a solution $ z_* $ of $ g(0,\ldots,0,z) = 0 $.
		
		Furthermore, note that for any $ z_n $ we can find a finite extension of $ K $ which contains this element $ z_n $.
		Hence we get a bound
		\begin{equation}
			\label{p5-eq:boundheight}
			h(z_n) \leq C_1 + C_2 \cdot h(1:\gamma_1^n:\ldots:\gamma_r^n).
		\end{equation}
		with absolute constants $ C_1, C_2 $ in the same way as in the proof of Theorem \ref{p5-thm:mainresult}.
		
		Remember that the condition that $ g(0,\ldots,0,Z) $ has no multiple zero as a polynomial in $ Z $ is equivalent to
		\begin{equation*}
			\frac{\partial g}{\partial Z} (0,\ldots,0,z_0) \neq 0
		\end{equation*}
		for all $ z_0 $ satisfying $ g(0,\ldots,0,z_0) = 0 $.
		Thus we can apply the Implicit Function Theorem \ref{p5-thm:implicitfunc} which gives a power series $ f(X_1,\ldots,X_r) $ with algebraic coefficients such that for $ n $ large enough we have
		\begin{equation*}
			z_n = f(\gamma_1^n,\ldots,\gamma_r^n).
		\end{equation*}
		
		Putting together the things we have so far yields that all zeros $ z $ of $ g(\gamma_1^n,\ldots,\gamma_r^n,z) $ can be described by finitely many power series $ f(\gamma_1^n,\ldots,\gamma_r^n) $.
		Since there are only finitely many possible combinations of the finitely many power series, for an infinite subset $ W_2 \subseteq W_1 $ we have the same fixed combination.
		Thus we have
		\begin{equation*}
			g(\gamma_1^n,\ldots,\gamma_r^n,Z) = (Z - f_1(\gamma_1^n,\ldots,\gamma_r^n)) \cdots (Z - f_d(\gamma_1^n,\ldots,\gamma_r^n))
		\end{equation*}
		for all $ n \in W_2 $.
		By our assumption, for all $ n \in W_2 $ we can group together the factors on the right hand side of the last equation such that we obtain two polynomials with coefficients in $ K $.
		For an infinite subset  $ W_3 \subseteq W_2 $ the two polynomials are built in the same way because there are only finitely many possibilities.
		Therefore for all $ n \in W_3 $ we have
		\begin{equation*}
			g(\gamma_1^n,\ldots,\gamma_r^n,Z) = h_1(n,Z) h_2(n,Z)
		\end{equation*}
		with fixed monic polynomials $ h_1(n,Z), h_2(n,Z) $ in $ Z $ having power series of the form $ f(\gamma_1^n,\ldots,\gamma_r^n) $ as coefficients.
		
		Now we take a closer look at the power series $ f(\gamma_1^n,\ldots,\gamma_r^n) $ occurring as coefficients of $ h_1(n,Z), h_2(n,Z) $.
		For any $ n \in W_3 $ the value of $ f(\gamma_1^n,\ldots,\gamma_r^n) $ lies in $ K $.
		Define the vector $ \xbf_n $ to be $ (\gamma_1^{w_n},\ldots,\gamma_r^{w_n}) $ where $ w_n $ is the $ n $-th index (in increasing order) within the set $ W_3 $.
		Then $ \xbf_n $ tends to zero, and $ f(\xbf_n) $ is defined and belongs to $ K $.
		We are going to apply Theorem \ref{p5-thm:infinmanycosets}. So we have to check that the four conditions are satisfied.
		Conditions one and two are checked in the same way as in the proof of Theorem \ref{p5-thm:mainresult}.
		
		The third condition is again satisfied if we can show that $ f(\gamma_1^n,\ldots,\gamma_r^n) $ is an $ S $-integer.
		For a valuation $ \nu $ on $ K $ we define the \emph{Gauss norm} $ \abs{h}_{\nu} $ of a polynomial $ h $ as the maximal $ \nu $-norm of its coefficients.
		By Lemma 1.6.3 in \cite{bombieri-gubler-2006} for two polynomials $ h_1, h_2 $ the equality $ \abs{h_1h_2}_{\nu} = \abs{h_1}_{\nu} \abs{h_2}_{\nu} $ holds for any non-archimedean valuation $ \nu $.
		Since $ g(\gamma_1^n,\ldots,\gamma_r^n,Z) $ is monic as a polynomial in $ Z $ and all its coefficients are $ S $-integers, we have $ \abs{g(\gamma_1^n,\ldots,\gamma_r^n,Z)}_{\nu} = 1 $ for all $ \nu \notin S $.
		Since all archimedean valuations are contained in $ S $ we get
		\begin{equation*}
			1 = \abs{g(\gamma_1^n,\ldots,\gamma_r^n,Z)}_{\nu} = \abs{h_1(n,Z)}_{\nu} \abs{h_2(n,Z)}_{\nu}
		\end{equation*}
		for all $ \nu \notin S $.
		Moreover, we have $ \abs{h_1(n,Z)}_{\nu} \geq 1 $ and $ \abs{h_2(n,Z)}_{\nu} \geq 1 $ because both polynomials are monic.
		Hence they must satisfy $ \abs{h_1(n,Z)}_{\nu} = 1 $ and $ \abs{h_2(n,Z)}_{\nu} = 1 $ and thus have $ S $-integer coefficients.
		
		For the last condition we remember that the $ f(\gamma_1^n,\ldots,\gamma_r^n) $ are sums of products of some $ z_n $. Using Lemma \ref{p5-lemma:heightprop} and the bound \eqref{p5-eq:boundheight} we get
		\begin{equation*}
			h(f(\gamma_1^n,\ldots,\gamma_r^n)) \leq C_3 + C_4 \cdot h(1:\gamma_1^n:\ldots:\gamma_r^n)
		\end{equation*}
		and the required bound holds.
		
		Therefore we can apply Theorem \ref{p5-thm:infinmanycosets} to the coefficients of $ h_1(n,Z), h_2(n,Z) $, which states that these coefficients coincide with polynomials of the form $ P(\gamma_1^n,\ldots,\gamma_r^n) $.
		Thus for an infinite subset $ W_4 \subseteq W_3 $ we have for all $ n \in W_4 $ the factorization
		\begin{equation}
			\label{p5-eq:generic}
			g(\gamma_1^n,\ldots,\gamma_r^n,Z) = h_1(n,Z) h_2(n,Z)
		\end{equation}
		with fixed monic polynomials $ h_1(n,Z), h_2(n,Z) $ in $ Z $ having polynomials of the form $ P(\gamma_1^n,\ldots,\gamma_r^n) $ as coefficients;
		hence the coefficients are linear recurrence sequences.
		
		It remains to prove the part about the arithmetic progression.
		For doing so we rewrite equation \eqref{p5-eq:generic} as
		\begin{equation*}
			g(\gamma_1^n,\ldots,\gamma_r^n,Z) - h_1(n,Z) h_2(n,Z) = 0.
		\end{equation*}
		This equation holds for infinitely many values of $ n $.
		Then we expand the polynomial on the left hand side of the last displayed equation. We get a polynomial in $ Z $ with linear recurrences as coefficients.
		Since this polynomial must be the zero polynomial, the coefficients of all monomials $ Z^i $ must vanish for the infinitely many values of $ n $.
		By the theorem of Skolem-Mahler-Lech there is at least one arithmetic progression $ \Pc $ such that all coefficients vanish for any $ n \in \Pc $.
		Thus equation \eqref{p5-eq:generic} holds for all $ n \in \Pc $.
    \end{proof}
	
	\section{Acknowledgement}
	
	The authors are grateful to Umberto Zannier for useful comments during preparation of this paper in particular related to Theorem \ref{p5-thm:red}.

\end{document}